\title{Irreducibility of Polynomials with Square Coefficients over Finite Fields}
\author{Lior Bary-Soroker}
\address{Raymond and Beverly Sackler School of Mathematical Sciences, Tel Aviv University, Tel Aviv}
\email{barylior@tauex.tau.ac.il}
\author{Roy Shmueli}
\address{Raymond and Beverly Sackler School of Mathematical Sciences, Tel Aviv University, Tel Aviv}
\email{royshmueli@mail.tau.ac.il}
\renewcommand{\pod}[1]{\allowbreak\mathchoice
  {\if@display \mkern 18mu\else \mkern 8mu\fi (#1)}
  {\if@display \mkern 18mu\else \mkern 8mu\fi (#1)}
  {\mkern4mu(#1)}
  {\mkern4mu(#1)}
}
\newcommand{\autotheorem}[3]{
\newaliascnt{#1counter}{#2}
\newtheorem{#1}[#1counter]{#3}
\expandafter\newcommand\csname #1counterautorefname\endcsname{#3}
}
\theoremstyle{plain}
\theoremstyle{definition}
\theoremstyle{remark}
\newtheoremstyle{case}%
  {\topsep}%
  {\topsep}%
  {}%
  {\parindent}%
  {\itshape}%
  {}%
  { }%
  {\thmname{#1}\thmnumber{ #2}:{\thmnote{ #3.}}}
\theoremstyle{case}
\newcommand*{\dchebotarv}[1][1]{\delta_{\mathrm{Cheb}, #1}}
\newcommand*{\rv}{\xi}
\newcommand*{\veca}{{\underline{\alpha}}}
\newcommand*{\vecb}{{\underline{\beta}}}
\newcommand*{\vect}{{\underline{t}}}
\newcommand*{\vecrv}{{\underline{\rv}}}
\newcommand*{\vecs}{{\underline{s}}}
\DeclareMathOperator{\sign}{sgn}
\newcommand*{\disc}[1][x]{\operatorname{Disc}_{#1}}
\newcommand*{\Sym}[1][n]{\operatorname{Sym}\pa*{#1}}
\newcommand*{\Alt}[1][n]{\operatorname{Alt}\pa*{#1}}
\newcommand*{\frobC}[2][\Ocl_K/\Ocl_F]{%
  \mathchoice{\pa*{\frac{#1}{#2}}}%
  {\pa*{\tfrac{#1}{#2}}}%
  {Artin symbol should not be in a subscript/superscript}%
  {Artin symbol should not be in a subscript/superscript}%
}
\newcommand*{\frob}[2][\Ocl_K/\Ocl_F]{%
  \mathchoice{\br*{\frac{#1}{#2}}}%
  {\br*{\tfrac{#1}{#2}}}%
  {Artin symbol should not be in a subscript/superscript}%
  {Artin symbol should not be in a subscript/superscript}%
}
\DeclareMathOperator{\Res}{Res}
\DeclareMathOperator{\cmp}{cmp}
\DeclareMathOperator{\irreg}{irreg}
\DeclareMathOperator{\SplitType}{SplitType}
\newcommand*{\ZZ}{{\mathbb{Z}}}
\newcommand*{\CC}{{\mathbb{C}}}
\newcommand*{\FF}{{\mathbb{F}}}
\renewcommand*{\Pr}{{\mathbb{P}}}
\newcommand*{\PP}{{\mathbb{P}}}
\newcommand*{\cond}{\;\middle|\;}
\newcommand*{\onebb}{{\mathbbm{1}}}
\DeclarePairedDelimiter{\pa}{\lparen}{\rparen}
\DeclarePairedDelimiter{\br}{\lbrack}{\rbrack}
\DeclarePairedDelimiter{\set}{\{}{\}}
\DeclarePairedDelimiter{\abs}{\lvert}{\rvert}
\newcommand*{\Ccl}{\mathcal{C}}
\newcommand*{\Fcl}{\mathcal{F}}
\newcommand*{\Ocl}{\mathcal{O}}
\newcommand*{\Scl}{\mathcal{S}}
\DeclareMathOperator{\Gal}{Gal}
\DeclareMathOperator{\Hom}{Hom}
\renewcommand{\AA}{\mathbb{A}}
\begin{document}

\begin{abstract}
We study a random polynomial of degree $n$ over the finite field $\mathbb{F}_q$, where the coefficients are independent and identically distributed and uniformly chosen from the squares in $\mathbb{F}_q$. Our main result demonstrates that the likelihood of such a polynomial being irreducible approaches $1/n + O(q^{-1/2})$ as the field size $q$ grows infinitely large. The analysis we employ also applies to polynomials with coefficients selected from other specific sets.
\end{abstract}

\maketitle

\listoffixmes

\section{Introduction}
\label{sec:introduction}

Consider the random polynomial 
\begin{equation}
  \label{eq:general-random-polynomial}
  P_{\vecrv}\pa*{x} = x^n + \rv_1 x^{n-1} + \rv_{2} x^{n-2} + \dots + \rv_n,
\end{equation}
where the coefficients $\rv_1, \dots, \rv_n$ are independent random variables taking values in a ring $R$. 
The main question of interest is what is the probability that $P_{\vecrv}\pa*{x}$ is irreducible over $R$. 

When \( R = \ZZ \), one expects the probability to be close to $1$. To clarify this, we focus on two central models. The first, often called the large box model, assumes that the degree \( n \) is fixed and the coefficients \(\rv_i \) are uniformly distributed within the interval \( \{-H, \dots, H\} \) as \( H \to \infty \). In this case, not only is \( P_{\vecrv}(x) \) asymptotically almost surely (a.a.s.) irreducible, but the Galois group is also the full symmetric group a.a.s. The main challenge in this model is to bound the error term and demonstrate that it arises from reducible polynomials (see \cite{%
vanderwaerden1936seltenheit, %
knobloch1957hilbertschen, %
chela1963reducible, %
gallagher1973large, %
kuba2009distribution, %
zywina2010hilberts, %
dietmann2013probabilistic, %
chow2020enumerative, %
anderson2021quantitative, %
chow2023enumerative, %
chow2023towards}). This problem remains open despite a recent major breakthrough \cite{bhargava2021galois}.

In the second model, while the coefficients do not grow, the degree does—for example, when \( \rv_i \) are Rademacher variables. Here, we always condition on \( \xi_n \neq 0 \). In this context, \cite{breuillard2019irreducibility} proved that under the General Riemann Hypothesis (GRH), the polynomial is irreducible a.a.s. Unconditional results \cite{barysoroker2020irreducible, barysoroker2023irreducibility} show that the probability is bounded away from zero in full generality and under mild restrictions tend to $1$ (e.g., \( \rv_i \) are uniformly distributed on an interval of length \( \geq 35 \)). See \cite{barysoroker2024full} for a treatment of a hybrid model in which \( \rv_i \) are as in the large box model, but \( n \) may grow arbitrarily fast as a function of \( H \).

In this work we assume that  $R$ is a finite field. Let $q$ be a prime power and $R= \FF_q$  the finite field with $q$ elements. In this setting, the ring of polynomials $\FF_q[x]$ is closely analogous to the ring of integers $\mathbb{Z}$, and therefore the irreducible polynomials are analogous to prime numbers. And indeed, the first result is the prime polynomial theorem \cite[Theorem~2.2]{rosen2002number} which may be interpreted as the statement that if the $\rv_i$ are uniform on $\FF_q$, then
\begin{equation}
    \label{eq:PPT}
  \Pr\br*{P_{\vecrv}\pa*{x} \text{ irreducible over } \FF_q} = \frac{1}{n} + O\pa*{q^{-n/2}},
\end{equation}
as $q^n\to \infty$. There are variants when we fix  some of the coefficients, see \cite{pollack2013irreducible,ha2016irreducible}.
Motivated by Maynard's theorem~\cite{maynard2019primes} on primes without the digit 7, 
Porritt~\cite{porritt2019irreducible} 
considered polynomials with coefficients that are uniformly distributed in a subset  $S \subsetneq \FF_q$.
He proved that if $\#S \ge q - \sqrt{q}/2$ then
\begin{equation}
  \Pr\br*{P_{\vecrv}\pa*{x} \text{ irreducible over } \FF_q} \sim \frac{q}{n\pa*{q-1}},
\end{equation}
as $n\to \infty$. 
See \cite{moses2017irreducible} for generalizations.

In the case that $n$ is fixed, there are much stronger results. In \cite{cohen1972uniform, bank2015prime}, one can fix all coefficients but three, under very mild assumptions. A general criterion  by Entin~\cite{entin2021factorization} deals with uniform coefficients in a subset $S \subseteq \FF_q^n$ that is ``regular'' in some precise sense:
\begin{equation}
  \label{eq:entin-factorization}
  \Pr\br*{P_{\vecrv}\pa*{x} \text{ irreducible over } \FF_q} = \frac{1}{n} + O_n\pa*{q^{-1/2} \cdot \operatorname{irreg}\pa*{S}},
\end{equation}
where $\irreg\pa*{S}$ is an explicit constant that depends on $S$, see \eqref{eq:irregularity-of-set}.
This asymptotic formula is useful in several classical settings, such as when $S$ is a product of arithmetic progressions.

However, when $q$ is odd and $S = \set*{\alpha^2 \cond \alpha \in \FF_q}^n$ is the set of vector with square entries, the error term in \eqref{eq:entin-factorization} is bigger than the main term: $\irreg\pa*{S} \ge \pa*{q^{1/2}-1}^n$, see \eqref{thm:irregularity-of-squares}. Thus \eqref{eq:entin-factorization} gives no non-trivial information.
Our main result says that the probability for irreducibility is still asymptotically $1/n$:
\begin{theorem}
  \label{thm:main-squares-coefficients}
  Let $P_{\vecrv} \in \FF_q\br*{x}$ be a random polynomial as in \eqref{eq:general-random-polynomial} with $\rv_1, \dots, \rv_n$ uniformly distributed in $\set*{\alpha^2 \cond \alpha \in \FF_q} \subseteq \FF_q$.
  Then
  \begin{equation}
    \Pr\br*{P_{\vecrv}\pa*{x} \text{ irreducible over } \FF_q} = \frac{1}{n} + O_n\pa*{q^{-1/2}},
  \end{equation}
  as $q \to \infty$.
\end{theorem}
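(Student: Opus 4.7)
My plan is to Fourier-expand the indicator of the set of squares $S = \set*{\alpha^2 \cond \alpha \in \FF_q}$ using the quadratic character of $\FF_q$, and then bound the resulting character sums over irreducible polynomials. This exploits the multiplicative structure of $S$ that is lost in Entin's generic irregularity bound \eqref{eq:entin-factorization}.

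Assume $q$ is odd (if $q$ is even, every element is a square and the claim reduces to \eqref{eq:PPT}). Let $\chi$ be the quadratic character of $\FF_q^{\times}$, extended by $\chi(0) = 0$. A direct check gives $\onebb_S(a) = \tfrac{1}{2}\pa*{1 + \chi(a)} + \tfrac{1}{2} \onebb_{a = 0}$. Expanding $\prod_{i=1}^n \onebb_S(a_i)$ yields $3^n$ summands indexed by pairs of disjoint subsets $A, B \subseteq \set*{1, \dots, n}$, where $A$ collects the indices contributing a factor $\chi(a_i)$ and $B$ those constrained to $a_i = 0$. Writing $I_n$ for the set of monic irreducible polynomials of degree $n$ over $\FF_q$ and
\[
  T(A,B) := \sum_{\substack{(a_1,\dots,a_n) \in \FF_q^n \\ a_i = 0 \text{ for } i \in B}} \onebb\br*{P_{(a_1,\dots,a_n)} \in I_n} \prod_{i \in A}\chi(a_i),
\]
we obtain $\abs*{S}^n \Pr\br*{P_{\vecrv} \in I_n} = 2^{-n} \sum_{A, B} T(A,B)$, summed over disjoint pairs. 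The term $(A,B) = (\emptyset, \emptyset)$ gives the main contribution: combining \eqref{eq:PPT} with $\abs*{S}^n = ((q+1)/2)^n$ yields $\tfrac{1}{n} + O_n(q^{-1})$. For $A = \emptyset$ and $B \ne \emptyset$, standard counts of irreducible polynomials with prescribed zero coefficients give $\abs*{T(\emptyset, B)} = O_n(q^{n - \abs*{B}})$, contributing $O_n(q^{-1})$.

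The main technical step is to show $\abs*{T(A, B)} \le C_n\, q^{n - \abs*{B} - 1/2}$ for every non-empty $A$ and every disjoint $B$; summed over $O_n(1)$ pairs and divided by $2^n \abs*{S}^n$, this gives the advertised $O_n(q^{-1/2})$ error. Equivalently, one needs square-root cancellation in the character sum $\sum_{P \in I_n} \prod_{i \in A} \chi\pa*{\operatorname{coef}_i P}$ (and its variants with prescribed zero coefficients). My approach is to view $\prod_{i \in A} a_i = \pm \prod_{i \in A} e_i(\alpha_1, \dots, \alpha_n)$ as a symmetric polynomial in the roots, pass to the $S_n$-Galois cover $\AA^n \to \AA^n$ defined by the elementary symmetric polynomials $e_1, \dots, e_n$, and apply Weil--Deligne bounds to the Kummer sheaf attached to $\chi \circ \pa*{\prod_{i \in A} e_i}$. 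Restricting to the Frobenius stratum of $n$-cycles (which, by Chebotarev for function fields, has density $\tfrac{1}{n} + O(q^{-1/2})$) isolates the irreducible polynomials and produces the desired cancellation.

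The main obstacle is verifying geometric non-triviality of the pulled-back Kummer sheaf on each relevant sub-cover: one must rule out that $\prod_{i \in A} e_i$ becomes a square in the function field of the ordered-root cover, possibly further constrained by the $B$-vanishing conditions. For non-empty $A$ this should hold, but the verification reduces to a ramification and monodromy computation on the symmetric polynomial in question, carried out uniformly in $A, B$ with constants depending only on $n$. This geometric input is precisely what the present work must contribute beyond Entin's framework, where $\irreg\pa*{S}$ was too large to accommodate the multiplicative structure of the squares.
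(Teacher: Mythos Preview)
Your Fourier/character-sum approach is a genuinely different route from the paper's. The paper never expands $\onebb_S$ via the quadratic character. Instead it introduces a notion of \emph{complexity} for subsets of $\FF_q$, proves a general theorem (\autoref{thm:main-general-coefficients} and \autoref{thm:main}) for coefficients uniform in arbitrary bounded-complexity sets, and deduces the squares case from $\{\alpha^2\}$ having complexity $\le 2$. The engine is a single Chebotarev application to the compositum $F_PN$, where $F_P$ is the splitting field of the generic polynomial and $N=N_1\cdots N_n$ with $N_i/\FF_q(t_i)$ the Galois closure of $f_i(x)-t_i$; for squares $N=\FF_q(\sqrt{t_1},\ldots,\sqrt{t_n})$. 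The crucial input is \autoref{thm:linearly-disjoint-extensions}: $F_P$ is linearly disjoint from every such ``decomposable'' $N$. This is proved by showing $\disc P$ is not a square in $N\bar\FF_q$ (\autoref{thm:no-square-root-of-disc}), so $\Gal(F_PN/N)\not\le\Alt$, and then invoking the normal-subgroup structure of $\Sym$ to force $\Gal(F_PN/N)=\Sym$.

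Your plan is coherent, but you correctly identify and leave open the key step: that the Kummer sheaf attached to $\prod_{i\in A}e_i$ is geometrically non-trivial on the ordered-root cover, i.e.\ that $\prod_{i\in A}t_i$ is not a square in $\bar\FF_q(\alpha_1,\ldots,\alpha_n)$ for each nonempty $A$. This is \emph{exactly} the specialisation of the paper's \autoref{thm:linearly-disjoint-extensions} to $N=\FF_q(\sqrt{t_1},\ldots,\sqrt{t_n})$: linear disjointness of $F_P$ and $N$ is equivalent to no $\prod_{i\in A}\sqrt{t_i}$ lying in $F_P$. So both approaches hinge on the same algebraic fact; the paper's trick is to collapse the $2^n-1$ separate non-square checks you would face into a single discriminant computation, using that the only quadratic subextension of $F_P/\FF_q(\vect)$ is $\FF_q(\vect)(\sqrt{\disc P})$. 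Two side remarks on your outline: for $B\neq\emptyset$ the trivial bound $|T(A,B)|\le q^{n-|B|}$ already gives an $O_n(q^{-1})$ contribution, so square-root cancellation is only needed when $B=\emptyset$; and your method, being tied to the quadratic character, does not extend to general image-sets $f(\FF_q)$ the way the paper's complexity framework does.
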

If $q$ is even, then  $\xi_i$ are uniform in $\FF_q$, hence it follows from \eqref{eq:PPT}. So from now on we assume $q$ is odd.

\autoref{thm:main-squares-coefficients} follows from a more general result,
in which the coefficients distribute uniformly in subsets of the following form.
For $M > 0$, we say that a subset $U \subseteq \FF_q$ is of  \emph{complexity $\leq M$} if there exist
\begin{equation}
  f_{11}, \dots f_{1k_1}, \dots, f_{m1}, \dots, f_{mk_m} \in \FF_q\br*{x}
\end{equation}
of positive degree such that 
\begin{equation}\label{eq:complexityM}
  U = \bigcup_{i=1}^m \bigcap_{j=1}^{k_i}f_{ij}(\FF_q)
  \qquad \text{and}\qquad
  \max_{\substack{i=1,\ldots, m\\j=1,\ldots,k_i}}\{m,k_i,\deg f_{ij} \} \leq  M
  .
\end{equation}
For example, the set of squares is of complexity $\leq 2$. Note that every subset $U$ of $\FF_ q$ is of complexity $\leq q$ since
\begin{equation}
  U = \bigcup_{\alpha \in U} f_\alpha\pa*{\FF_q} \qquad \text{where}\qquad 
  f_\alpha\pa*{x} = x^q - x + \alpha.
\end{equation}

\begin{theorem}
  \label{thm:main-general-coefficients}
  Let $q$ be an odd prime power, and  $U_1, \dots, U_n \subseteq \FF_q$ nonempty subsets  of complexity $\leq M$.
  Let $P_{\vecrv} \in \FF_q\br*{x}$ be a random polynomial as in \eqref{eq:general-random-polynomial} with $\rv_1, \dots, \rv_n$ uniformly distributed in $U_1, \dots, U_n$ respectively.
  Then
  \begin{equation}
    \Pr\br*{P_{\vecrv}\pa*{x} \text{ irreducible over } \FF_q} = \frac{1}{n} + O_{n,M}\pa*{q^{n-1/2} \cdot \#U_1^{-1} \cdots \#U_n^{-1}},
  \end{equation}
  as $q \to \infty$.
\end{theorem}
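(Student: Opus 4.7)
The plan is to reduce, via inclusion--exclusion on the complexity decomposition, to a Chebotarev-style equidistribution for a polynomial family over an explicit $n$-dimensional parameter variety, and then to verify that the generic Galois group is $S_n$.

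First, expanding $U_i = \bigcup_{j=1}^{m_i}\bigcap_{k=1}^{k_{ij}} f_{ijk}(\FF_q)$ by inclusion--exclusion, it suffices (with only $O_M(1)$ resulting terms) to prove, for every tuple $\vec{S} = (S_1,\dots,S_n)$ of nonempty $S_i\subseteq[m_i]$, the per-branch estimate
\[
\#\set*{\vec{\alpha}\in V_{\vec{S}} : P_{\vec{\alpha}} \text{ irreducible}} = \frac{1}{n}\#V_{\vec{S}} + O_{n,M}\pa*{q^{n-1/2}},
\]
where $V_{\vec{S}} := \prod_i\bigcap_{j\in S_i}\bigcap_k f_{ijk}(\FF_q)$. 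I would then parametrize each $V_{i,S_i}$ by preimages: set
\[
Y_i := \set*{(\beta_{jk})_{j\in S_i,\,k}\in \FF_q^{r_i} : f_{ijk}(\beta_{jk}) \text{ are all equal}},\qquad r_i := \sum_{j\in S_i} k_{ij},
\]
with $\pi_i\colon Y_i\to V_{i,S_i}$ sending $(\beta_{jk})$ to the common value, and $Y:=\prod_i Y_i$, $\pi := \prod_i\pi_i$. Each $Y_i$ is defined by $r_i-1$ equations in $r_i$ variables, so $\dim Y = n$; the generic fibre of $\pi$ has constant size $C := \prod_{i,j,k}\deg f_{ijk}$, and the exceptional locus (where some $\#f_{ijk}^{-1}(\alpha_i) < \deg f_{ijk}$) has cardinality $O_M(q^{n-1})$.

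On $Y$, I would consider the polynomial family
\[
Q(\vec{\beta},x) := x^n + \pi_1(\vec{\beta}_1)x^{n-1} + \cdots + \pi_n(\vec{\beta}_n) \in \FF_q[Y][x].
\]
A Chebotarev-style equidistribution theorem for polynomial families (via Deligne's Riemann hypothesis, or Lang--Weil combined with the Frobenius--Chebotarev argument) yields
\[
\#\set*{\vec{\beta}\in Y(\FF_q) : Q(\vec{\beta},x) \text{ irreducible}} = \frac{1}{n}\#Y(\FF_q) + O_{n,M}\pa*{q^{n-1/2}},
\]
provided the generic Galois group of $Q$ over $\FF_q(Y)$ is $S_n$. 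The left-hand side equals $\sum_{\vec{\alpha}\in V_{\vec{S}}}\#\pi^{-1}(\vec{\alpha})\,\onebb_{\text{irr}}(P_{\vec{\alpha}})$; splitting off the exceptional locus gives $C\#\set*{\vec{\alpha}\in V_{\vec{S}} : P_{\vec{\alpha}} \text{ irreducible}} + O_M(q^{n-1})$, and similarly $\#Y(\FF_q) = C\#V_{\vec{S}} + O_M(q^{n-1})$. Dividing by $C$ produces the per-branch estimate, and summing over $\vec{S}$ with the inclusion--exclusion signs and dividing by $\prod_i \#U_i$ delivers the theorem.

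The main obstacle is the Galois-theoretic claim that the generic Galois group of $Q$ over $\FF_q(Y)$ is $S_n$. Since each $f_{ij_0 k_0}$ has positive degree, the coefficients $t_i := \pi_i(\vec{\beta}_i)$ are non-constant polynomials in the pairwise disjoint variable sets $\vec{\beta}_i$, hence algebraically independent over $\FF_q$; by the classical generic-polynomial result, the universal polynomial $x^n + t_1 x^{n-1} + \cdots + t_n$ has Galois group $S_n$ over $\FF_q(\vec{t})$. It then suffices to show $\FF_q(Y)$ is linearly disjoint from the splitting field of this universal polynomial over $\FF_q(\vec{t})$, which by a product argument reduces to showing that the relevant geometric components of each $Y_i$ are irreducible covers of $\Spec \FF_q[t_i]$---a standard fact about fibre-product curves $\set{(\beta,\beta') : f(\beta) = g(\beta')}$ for positive-degree polynomials. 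When a $Y_i$ decomposes into several geometric components, the estimate is applied componentwise and the contributions summed.
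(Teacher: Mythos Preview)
Your overall architecture---inclusion--exclusion down to intersections of images of polynomials, then Chebotarev for the family over the resulting parameter space, with the crucial input being that the generic Galois group is $S_n$---is exactly the paper's strategy. The gap is in the last paragraph, where you justify the Galois-theoretic claim.

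You correctly observe that it suffices to show $\FF_q(Y)$ is linearly disjoint from the splitting field $F_P$ of the universal polynomial over $\FF_q(\vect)$. But your proposed reduction, ``by a product argument reduces to showing that the relevant geometric components of each $Y_i$ are irreducible covers of $\Spec\FF_q[t_i]$,'' does not work. Irreducibility of $Y_i$ over the $t_i$-line says only that $\FF_q(Y_i)/\FF_q(t_i)$ is a field extension; it gives no information about how $\FF_q(Y)$ interacts with $F_P$, which is a genuinely $n$-variable object not built out of one-variable pieces. (Also, the ``standard fact'' you invoke is false: fibre-product curves $\{f(\beta)=g(\beta')\}$ are frequently reducible, e.g.\ $f=g=x^2$.) There is no general principle that a compositum of one-variable extensions is automatically linearly disjoint from $F_P$.

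The paper's proof of exactly this linear disjointness (its Proposition on decomposable extensions) goes through the discriminant: since the only nontrivial proper normal subgroup of $S_n$ is contained in $A_n$, it suffices to show $\sqrt{\disc P}\notin \bar\FF_q\cdot \FF_q(Y)$, i.e.\ that $\disc P$ is not a square in any decomposable extension $L_1\cdots L_n$ with $L_i/\bar\FF_q(t_i)$ finite. This is done by induction on $n$, specializing $t_n\mapsto 0$ to reduce $\disc P$ to $t_{n-1}^2\cdot\disc Q$ for the generic degree-$(n-1)$ polynomial $Q$, and handling the base case $n=2$ by choosing $\alpha$ with $t_2-\alpha$ a nonsquare in $L_2$. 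This argument, or something equivalent, is what your proof is missing.
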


The error term goes to zero if $\# U_i \gg_M q$ for all $i$ (in the sense, $\# U_i\geq C_M q$). This happens in many cases, for example, if $U_i=f_i(\FF_q)$, for $1\leq \deg f_i\leq M$. 
In some cases, e.g.\ $U = \{ \alpha^2\} \cap \{-\alpha^2\}$ and $q\equiv 3\mod 4$ we have $\#U \ll_M 1$. We discuss this further in \autoref{sec:size-of-complex-sets}.


\subsection*{Acknowledgements}
The authors thank Alexei Entin for helpful discussions. 

This research was supported by the Israel Science Foundation (grant no.\ 366/23).


\section{Preliminaries}
\label{sec:preliminaries}
This section establishes the notation and revisits foundational facts necessary for the principal results.

\subsection{Splitting types}
Let $\Scl_n$ denote the set of $n$-tuples $\vecs = \pa*{s_1, \dots, s_n}$ of non-negative integers $s_i \geq 0$ with the constraint $s_1 + 2s_2 + \dots + ns_n = n$. The \emph{splitting type} of a permutation $\sigma$ in the symmetric group $\Sym$ on $n$ letters is defined as $\SplitType\pa*{\sigma}:=\vecs \in \Scl_n$ such that $s_i$ represents the number of cycles of length $i$ in the disjoint cycle decomposition of $\sigma$. Therefore, two permutations in $\Sym$ are conjugate if and only if they share the same splitting type. For $\vecs \in \Scl_n$, we denote by $C_{\vecs}$ the conjugacy class in $\Sym$ comprising all permutations with splitting type $\vecs$. The size of $C_{\vecs}$ is determined by Cauchy's formula:
\begin{equation}
  \label{eq:conjugacy-class-size}
  \#C_{\vecs} = n! \pa*{\prod_{i=1}^n i^{s_i} \cdot s_i!}^{-1}.
\end{equation}
Let $k$ be a field, and let $P \in k\br*{x}$ be a polynomial of degree $n$. The \emph{splitting type} of $P$ is $\SplitType\pa*{P}:= \vecs \in \Scl_n$ where $s_i$ is the number of irreducible factors of $P$ of degree $i$, counted with multiplicity.

\subsection{Discriminant}
Let $k$ be a field and let $P \in k\br*{x}$ be a separable polynomial.
Denote by $k_P$ the splitting field of $P$ over $k$ and set $G = \Gal\pa*{k_P / k}$.
Let $z_1, \dots, z_n$ be the roots of $P$ in $k_P$, and identify $G$ with a subgroup of $\Sym$ by the action of $G$ on the roots of $P$.

The discriminant of $P$ is defined as
\begin{equation}
  \disc P = \prod_{1 \le i < j \le n} \pa*{z_i - z_j}^2 \in k^\times.
\end{equation}
For $\sigma \in G$, we have that $\sigma\pa*{\sqrt{\disc P}} = \sign \pa*{\sigma} \sqrt{\disc P}$ where $\sign \pa*{\sigma}$ is the sign of the permutation $\sigma$. 
Thus, the following statement is immediate from the fundamental theorem of Galois theory:
\begin{proposition}
  \label{thm:odd-permutation-in-galois-with-char-not-two}
  Let $k$ be a field of characteristic $\neq 2$. 
  Then $G\not \leq \Alt$ if and only if $\disc P$ is not a square in $k$.
\end{proposition}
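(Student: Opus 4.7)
The plan is to work with the square root $\Delta := \prod_{1 \le i < j \le n}(z_i - z_j) \in k_P$, which satisfies $\Delta^2 = \disc P$, and to show that $G \leq \Alt$ is equivalent to $\Delta \in k$, which in turn is equivalent to $\disc P$ being a square in $k$. The contrapositive then yields the claim as stated. Since $P$ is separable, the roots $z_1, \dots, z_n$ are distinct, so $\Delta \neq 0$; this nonvanishing will be used together with $\operatorname{char}(k) \neq 2$ to pass from $\sigma(\Delta) = \Delta$ to $\sign(\sigma) = 1$.

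First I would establish the Galois-equivariance relation $\sigma(\Delta) = \sign(\sigma)\, \Delta$ for every $\sigma \in G \leq \Sym$. This is the identity already quoted in the excerpt just above the proposition, but I would briefly justify it by noting it holds for transpositions (a transposition $(ij)$ negates the single factor $(z_i - z_j)$ and permutes the remaining factors in pairs that together are invariant) and then extends multiplicatively, since $\sign$ is a homomorphism and transpositions generate $\Sym$.

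Next I would invoke the fundamental theorem of Galois theory: since $k_P/k$ is Galois with group $G$, the fixed field of $G$ in $k_P$ equals $k$. Therefore $\Delta \in k$ if and only if $\sigma(\Delta) = \Delta$ for all $\sigma \in G$, i.e., $\sign(\sigma)\,\Delta = \Delta$ for all $\sigma \in G$. Because $\Delta \neq 0$ and $\operatorname{char}(k) \neq 2$ (so $\Delta \neq -\Delta$), this is equivalent to $\sign(\sigma) = 1$ for all $\sigma \in G$, that is, $G \leq \Alt$.

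Finally I would verify that $\Delta \in k$ is equivalent to $\disc P$ being a square in $k$. One direction is immediate since $\disc P = \Delta^2$. For the converse, if $\disc P = c^2$ with $c \in k$, then $(\Delta - c)(\Delta + c) = 0$ inside the field $k_P$, so $\Delta = \pm c \in k$. There is no real obstacle here; the only point requiring care is the dependence on $\operatorname{char}(k) \neq 2$, which enters both in distinguishing $\Delta$ from $-\Delta$ and in ensuring that the sign character is nontrivial, and without which the statement fails.
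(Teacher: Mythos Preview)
Your proof is correct and follows exactly the approach the paper indicates: the paper notes the identity $\sigma(\sqrt{\disc P}) = \sign(\sigma)\sqrt{\disc P}$ and then simply declares the proposition ``immediate from the fundamental theorem of Galois theory,'' which is precisely the argument you have written out in detail. You have supplied the routine steps the paper omits, including the equivalence $\Delta \in k \Leftrightarrow \disc P \in (k^\times)^2$, and there is nothing to add.
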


\subsection{Chebotarev's Density Theorem}
\label{sec:chebotarev-density-theorem}
We briefly introduce an explicit and uniform version of Chebotarev's Density Theorem over finite fields. All proofs can be found in \cite[Appendix A]{andrade2015shifted} in the language of rings or in  \cite[\S 4]{entin2019monodromy} in the language of schemes.

Let $F$  be a field which is a regular extension of the finite field $\FF_q$. Let $\Ocl_F$ be an integrally closed finitely generated $\FF_q$-algebra with fraction field $F$. Let $K / F$ be a finite Galois extension with Galois group $G$, let $\Ocl_K$ be the integral closure of $\Ocl_F$ in $K$, and let $\FF_{q^r}$ be the relative algebraic closure of $
\FF_q$ in $K$.
In particular, $\FF_{q^r}$ is contained in $\Ocl_K$.
For each $v \ge 0$, we define
\begin{equation}
  \label{eq:galois-v}
  G_v = \set*{\sigma \in G \cond \sigma\pa*{\alpha} = \alpha^{q^v} \text{ for all } \alpha \in \FF_{q^r}}.
\end{equation}
Then $G_0\leq G$ is the kernel of the restriction map $G\to \mathbb{F}_{q^r}$. If $v\geq 1$, then  $G_v$ is a set on which $G_0$ acts by conjugation. Let $\Ccl\pa*{G_v}$ denote the set of orbits. 

\begin{remark}
    \label{rem:Gvforregular}
    If $K$ is regular over $\FF_q$, then $r=1$, hence $G_v=G$ for all $v\geq 0$. 
\end{remark}

Let $\Phi \in \Hom_{\FF_{q^r}}\pa*{\Ocl_K, \bar \FF_q}$ be unramified over $\Ocl_F$ (in the sense that $\ker\Phi$ is unramified).
Then, there exists a unique element in $G$, which we call the Frobenius element, and we denote by
\begin{equation}
  \frob{\Phi} \in G,
\end{equation}
such that for all $u \in \Ocl_F$
\begin{equation}
\label{eq:actionoffrob}
  \Phi\pa*{\frob{\Phi} u} = \Phi\pa*{u}^q.
\end{equation}

If we fix a surjective $\phi \in \Hom_{\FF_q}\pa*{\Ocl_F, \FF_{q^v}}$ that is unramified 
in $\Ocl_K$, then the set 
\begin{equation}
  \frobC{\phi} = \set*{\frob{\Phi} \cond \Phi \in \Hom_{\FF_q}\pa*{\Ocl_K, \bar \FF_q} \text{ extends } \phi} \subseteq G_v,
\end{equation}
is invariant under conjugation from $G_0$, since ${\frob{\Phi}}^\tau= \frob{\Phi\circ \tau}$, $\tau\in G_0$. Moreover, $G_0$ acts transitively on $\frobC{\phi}$, so  $\frobC{\phi} \in  \Ccl\pa*{G_v}$.

We use a complexity notion $\cmp(\Ocl_F)=\cmp(\Ocl_F/\Ocl_F)$ and $\cmp(\Ocl_K/\Ocl_F)$ as defined in   \cite[Section 4]{entin2019monodromy} in the language of schemes.
In particular, the complexity of 
\begin{equation}
  \label{eq:fq-algebra-general-form}
  \FF_q\br*{x_1, \dots, x_n, f_0^{-1}} / \pa*{f_1(x_1,\ldots, x_n), \dots, f_m(x_1,\ldots, x_n)},
\end{equation}
is bounded by $\max(n, \deg f_0,\ldots, \deg f_m)$, and the complexity of $S/R$ is a bounded by a function of  $\cmp\pa*{R}$, $\cmp\pa*{S}$, and the degree of the extension $S / R$.

By the Lang-Weil's estimates \cite{lang1954number}, the number of ramified $\phi$ is $O_{\cmp(\Ocl_K/\Ocl_F)}(q^{n-1})$. We shall use this bound in the rest of the work multiple times. 

In this language, Chebotarev's theorem gives a quantitative equidsitribution of $\frobC{\phi}$ in $\Ccl\pa*{G_v}$: 
For $C \in \Ccl\pa*{G_v}$, we define
\begin{equation}
  \dchebotarv[v]\pa*{C} =\frac{1}{q^{vd}} \cdot \# \set*{\phi \in \Hom_{\FF_q}\pa*{\Ocl_F, \FF_{q^v}} \cond \begin{array}{c}\phi \text{ surjective, unramified in } \Ocl_K \\ \text{and } \frobC{\phi} = C\end{array}}.
\end{equation}
By the Lang-Weil estimates \cite{lang1954number}, the number of surjective $\phi\in \Hom_{\FF_q}\pa*{\Ocl_F,\FF_{q^v}}$ is equal to $q^{vd}+O(q^{v(d-1)})$, so $\dchebotarv[v]\pa*{C}$ is the asymptotic density of such $\phi$-s with $\frobC{\phi} = C$.

\begin{theorem}[Chebotarev's Density Theorem]
  \label{thm:chebotarev-density-theorem}
  Let $v\geq 1$ and $C \in \Ccl\pa*{G_v} $.
  Then
  \begin{equation}
    \label{eq:chebotarev-density-theorem}
    \dchebotarv[v]\pa*{C} = \frac{\#C}{\#G_v} + O_{\cmp\pa*{\Ocl_K / \Ocl_F}}\pa*{q^{-1/2}}.
  \end{equation}
\end{theorem}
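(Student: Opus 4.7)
The plan is to translate $\dchebotarv[v]\pa*{C}$ into a point-counting problem on étale covers and invoke the Lang--Weil and Deligne--Weil bounds. Geometrically, $X = \Spec\pa*{\Ocl_F}$ is a geometrically integral $\FF_q$-variety of some dimension $d$ (using that $F/\FF_q$ is regular), and $Y = \Spec\pa*{\Ocl_K}$ sits over $X$ as its integral closure. Removing the ramified locus, which has codimension $\geq 1$ and contributes $O_{\cmp\pa*{\Ocl_K/\Ocl_F}}\pa*{q^{v(d-1)}}$ to the count by Lang--Weil applied to that locus, produces a Galois étale cover $V \to U$ with group $G$. Surjective unramified $\phi \in \Hom_{\FF_q}\pa*{\Ocl_F,\FF_{q^v}}$ then correspond to $\FF_{q^v}$-points of $U$ not factoring through a proper subfield, and the Frobenius class $\frob{\Phi_x}$ records the action on the fiber of $V$ above $x$.

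Second, I would expand the indicator $\onebb_C$ of the orbit $C\in\Ccl\pa*{G_v}$ as a linear combination of $G_0$-conjugation-invariant class functions on $G_v$, each of which arises as the restriction of an irreducible character $\chi$ of $G$. This rewrites the count as
\begin{equation}
q^{vd}\cdot\dchebotarv[v]\pa*{C} = \frac{\#C}{\#G_v}\cdot\#U\pa*{\FF_{q^v}} + \sum_{\chi \neq \chi_0}a_\chi S_\chi + O_{\cmp\pa*{\Ocl_K/\Ocl_F}}\pa*{q^{v(d-1)}},
\end{equation}
where $S_\chi := \sum_{x \in U\pa*{\FF_{q^v}}} \chi\pa*{\frob{\Phi_x}}$ and $\chi_0$ denotes the trivial character. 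The Lang--Weil estimate $\#U\pa*{\FF_{q^v}} = q^{vd} + O_{\cmp\pa*{\Ocl_K/\Ocl_F}}\pa*{q^{v(d-1/2)}}$ already delivers the main term $\#C/\#G_v$ with an admissible error.

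Finally, the non-trivial character sums are controlled via the Grothendieck--Lefschetz trace formula: for each nontrivial irreducible $\chi$ of $G$, let $\Lcl_\chi$ be the lisse $\ell$-adic sheaf on $U$ associated to $\chi$ via the cover $V\to U$. Then
\begin{equation}
S_\chi = \sum_{i=0}^{2d}(-1)^i\Tr\pa*{\frobAut[q]^v \mid H^i_c\pa*{U_{\bar\FF_q}, \Lcl_\chi}}.
\end{equation}
Deligne's Weil II bounds each Frobenius eigenvalue on $H^i_c$ by $q^{i/2}$, and the vanishing of $H^{2d}_c$ for nontrivial $\chi$ (no $G$-invariants in the representation) yields $S_\chi = O\pa*{q^{v(d-1/2)}\cdot b_\chi}$, where $b_\chi$ is the total Betti number of $\Lcl_\chi$. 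The principal obstacle, and the main technical content of the cited references, is securing a uniform Bezout/Katz-type upper bound on $b_\chi$ and on $\#G$ purely in terms of $\cmp\pa*{\Ocl_K/\Ocl_F}$. Granting such uniform estimates, summing over the boundedly many characters and dividing by $q^{vd}$ yields the claimed error $O_{\cmp\pa*{\Ocl_K/\Ocl_F}}\pa*{q^{-1/2}}$ (in fact $q^{-v/2}$).
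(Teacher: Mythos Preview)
The paper does not give an independent proof here: its entire argument is the citation ``See \cite[Theorem~A.4]{andrade2015shifted} and \cite[Theorem~3]{entin2019monodromy}.'' Your sketch is a faithful outline of exactly the argument those references carry out---pass to an \'etale Galois cover over the unramified locus, expand the indicator of $C$ in irreducible characters, compute each character sum via the Grothendieck--Lefschetz trace formula, bound Frobenius eigenvalues by Deligne's Weil~II, and kill $H^{2d}_c$ for nontrivial $\chi$---with the uniform Betti-number bound in terms of $\cmp\pa*{\Ocl_K/\Ocl_F}$ being, as you correctly flag, the genuine technical input supplied by those papers. So your proposal and the paper's (outsourced) proof are the same approach.

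One small point worth tightening: $C$ is a $G_0$-conjugation orbit inside the coset $G_v$, not a full $G$-conjugacy class, so the expansion of $\onebb_C$ in terms of irreducible characters of $G$ requires a word of justification (e.g.\ extend by zero to $G$ and use that restrictions of irreducible characters of $G$ still span the $G_0$-class functions on $G_v$, or work directly with the twisted-coset orthogonality relations). The cited references handle this, but your sketch glosses over it.
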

\begin{proof}
    See \cite[Theorem A.4]{andrade2015shifted}\footnote{The implied constant in \emph{loc.cit.} is written to depend only on $\cmp(\Ocl_F)$ and $[K:F]$. This is not true, and the correct error term is as written here.} and \cite[Theorem 3]{entin2019monodromy}.
\end{proof}

\begin{remark}
  In \cite{andrade2015shifted} the authors require that $\Ocl_K / \Ocl_F$ is unramified.
  This requirement may be added by localizing at the ramified locus,  and by the Lang-Weil estimates \cite{lang1954number} applied to the discriminant of $\Ocl_K / \Ocl_F$. This will change the formula by $O_{\cmp\pa*{\Ocl_K/\Ocl_F}}\pa*{q^{-1}}$, hence will not affect  \eqref{eq:chebotarev-density-theorem}.
\end{remark}

\begin{remark}
    Obviously, \autoref{thm:chebotarev-density-theorem} generalizes to any $G_0$-invariant subset of $C \subseteq G_{v}$. Indeed, such $C$ is the disjoint union of elements in $\Ccl(G_v)$. Note if $C =\varnothing$, then $\dchebotarv[v](C)=0$.
\end{remark}

    We state and prove a well-known corollary that will be used in the sequel. 

    \begin{corollary}
    \label{thm:chebotarev_lin_dis}
        In the notation above, assume that $K=LM$ for $L,M$ linearly disjoint Galois extensions of $F$, and $L$ regular over $\FF_q$. Let $A=\Gal(L/F)$, $B=\Gal(M/F)$, let $v\geq 0$, let $C\subseteq A$ be a conjugacy class,  and $D\in \Ccl(B_{v})$. Then, $C\times D\in \Ccl(G_v)$ and 
        \[
            \dchebotarv[v]\pa*{C\times D} = \dchebotarv\pa*{C}\cdot \dchebotarv[v]\pa*{D}+O_{\cmp\pa*{\Ocl_K/\Ocl_F}}\pa*{q^{-1/2}}.
        \]
    \end{corollary}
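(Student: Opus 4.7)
The plan is to apply \autoref{thm:chebotarev-density-theorem} separately to $K/F$, $L/F$, and $M/F$ and to combine the outputs. First I would set up the decomposition: since $L$ and $M$ are linearly disjoint Galois extensions of $F$ with compositum $K$, restriction identifies $G=\Gal(K/F)$ with $A\times B$. The hypothesis that $L$ is regular over $\FF_q$ gives $L\cap\bar\FF_q=\FF_q$, so the relative algebraic closure of $\FF_q$ in $K$ coincides with that in $M$, call it $\FF_{q^r}$. Under the product identification, an element $(a,b)\in A\times B$ acts on $\FF_{q^r}\subseteq K$ only through its $B$-component, so $G_v=A\times B_v$ and $G_0=A\times B_0$.

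Next I would check that $C\times D$ is a single $G_0$-orbit in $G_v$. The conjugation action of $G_0=A\times B_0$ on $G_v=A\times B_v$ is componentwise, so the $G_0$-orbit of $(a,b)$ is the product of the $A$-conjugacy class of $a$ with the $B_0$-orbit of $b$. Since $C$ is an $A$-conjugacy class and $D\in\Ccl\pa*{B_v}$, we get $C\times D\in\Ccl\pa*{G_v}$.

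For the density identity, Chebotarev applied to $K/F$ at level $v$ yields
\begin{equation*}
\dchebotarv[v]\pa*{C\times D}=\frac{\#C\cdot\#D}{\#A\cdot\#B_v}+O\pa*{q^{-1/2}};
\end{equation*}
applied to $L/F$ at level $1$, and using $A_1=A$ (which holds precisely because $L$ is regular), it yields
\begin{equation*}
\dchebotarv\pa*{C}=\frac{\#C}{\#A}+O\pa*{q^{-1/2}};
\end{equation*}
and applied to $M/F$ at level $v$ it yields
\begin{equation*}
\dchebotarv[v]\pa*{D}=\frac{\#D}{\#B_v}+O\pa*{q^{-1/2}}.
\end{equation*}
Multiplying the last two and using that $\#C/\#A\leq 1$ and $\#D/\#B_v\leq 1$ absorbs the cross terms into $O\pa*{q^{-1/2}}$, producing the same main term as the first display. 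Subtracting gives the claim. All implied constants depend only on $\cmp\pa*{\Ocl_K/\Ocl_F}$, which dominates the complexities of $\Ocl_L/\Ocl_F$ and $\Ocl_M/\Ocl_F$.

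There is no serious obstacle; the argument is essentially bookkeeping around three applications of Chebotarev. The one structural point that truly uses the regularity hypothesis on $L$ is the identification $G_v=A\times B_v$: if both $L$ and $M$ contributed to the constant subfield of $K$, then $G_v$ would sit diagonally inside $A_v\times B_v$ rather than as a product, and the density would not factor in this clean way.
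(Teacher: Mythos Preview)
Your argument is correct and matches the paper's proof essentially line for line: identify $G\cong A\times B$, use regularity of $L$ to get $G_v=A\times B_v$ and hence $C\times D\in\Ccl(G_v)$, then apply \autoref{thm:chebotarev-density-theorem} to each of $K/F$, $L/F$, $M/F$ and combine. The only cosmetic difference is that the paper records the compatibility $\frobC{\phi}=\pa*{\frobC[\Ocl_L/\Ocl_F]{\phi},\frobC[\Ocl_M/\Ocl_F]{\phi}}$ before invoking Chebotarev, whereas you bypass this and go straight to the three density estimates; your route is slightly more direct and equally valid.
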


\begin{proof}
    We may identify $G\cong A\times B$ via the map $\sigma \mapsto (\sigma|_{L},\sigma|_{M})$. 
    Since $L$ is regular over $\FF_q$
    and since $L,M$ are linearly disjoint over $F$, it follows that the algebraic closures of $\FF_q$ in $M$ and in $K$ coincide. Hence, $G_v = A\times B_v$ and $C\times D\in \Ccl(G_v)$.
    Finally, if $\phi\in \Hom_{\FF_q}\pa*{F,\FF_{q^v}}$ is surjective and unramified in $K$, then by \eqref{eq:actionoffrob} one may readily deduce that 
    \[
        \frobC{\phi} = \pa*{\frobC[\Ocl_{L}/\Ocl_F]{\phi},\frobC[\Ocl_{M}/\Ocl_F]{\phi}}.
    \]
    The assertion follows from \autoref{thm:chebotarev-density-theorem} applied to each of the terms, and noting that $\cmp\pa*{\Ocl_{L}/\Ocl_F}$, $\cmp\pa*{\Ocl_{M}/\Ocl_F}$ are bounded in terms of $\cmp\pa*{\Ocl_{K}/\Ocl_F}$ and that the number of $\phi\in \Hom_{\FF_q}\pa*{F,\FF_{q^v}}$ that are ramified in any of the fields $K,L,M$ is $O_{\cmp\pa*{\Ocl_{K}/\Ocl_F}}\pa*{q^{-1}}$.
\end{proof}

\subsection{Two examples}
We explicitly  compute two examples that will be used in the proof of the main result. 

\begin{example}
    \label{ex:generic}
Let $\vect=\pa*{t_1, \dots, t_n}$ be an $n$-tuple of indeterminates. The \emph{generic polynomial of degree $n$} is defined to be 
\begin{equation}
  \label{eq:generic-polynomial}
  P\pa*{\vect; x} = x^n + t_1 x^{n-1} + \dots + t_n \in \FF_q\pa*{\vect}\br*{x}. 
\end{equation}
Let $F=\FF_q(\vect)$, let $F_P$ be the splitting field of $P$ over $F$. Then $G=\Gal(F_P/F)\cong \Sym$ and $F_P/\FF_q$ is regular. Moreover, $\phi\in \Hom_{\FF_q}(F,\FF_q)$ is unramified in $F_P$ if and only if $P^{\phi}:= P\pa*{\phi{\pa*\vect};x}\in \FF_q[x]$ is separable. 

From \eqref{eq:actionoffrob} it follows that if $\phi\in \Hom_{\FF_q}\pa*{F,\FF_q}$ is unramified in $F_P$, then 
\begin{equation}
    \label{eq:actionfrobgenericpolynomial}
    \SplitType\pa*{P^{\phi}}=\SplitType\pa*{\frobC[\Ocl_{F_P}/\Ocl_{F}]{\phi}}.
\end{equation}
Since $\cmp(\Ocl_{F_P}/\Ocl_{F})$ is bounded by a function in $n$ and since the number of ramified $\phi$ is $O_n(q^{-1})$,  \autoref{thm:chebotarev-density-theorem} and \eqref{eq:conjugacy-class-size} imply that for any $\vecs\in \Scl_n$ we have
\begin{equation}\label{eq:computation_generic}
    \begin{split}       
    &\frac{1}{q^n} \set*{\phi\in  \Hom_{\FF_q}(F,\FF_q) \cond \SplitType\pa*{P^{\phi}} = \vecs} \\
    &\qquad=\frac{1}{q^n} \set*{\phi\in  \Hom_{\FF_q}(F,\FF_q) \cond 
    \phi\text{ ramified or }\SplitType\pa*{\frobC[\Ocl_{F_P}/\Ocl_{F}]{\phi}} = \vecs}
    \\ &\qquad=\pa*{\prod_{i=1}^n i^{s_i} \cdot s_i!}^{-1} +O_n(q^{-1/2}).
    \end{split}
\end{equation}
\end{example}

\begin{example}
    \label{ex:imagesoffij}
Consider univariate polynomials $f_{ij} (x)\in \FF_q[x]$ of degree $>0$,  for $i=1,\ldots, n$ and $j=1,\ldots, k_i$. Define 
\begin{equation}
    \label{eq:defSi}
    S_i = \bigcap_{j=1}^{k_i} f_{ij}(\FF_q)\subseteq \FF_q.
\end{equation}
We may assume without loss of generality that $f_{ij}(x)-t_i$ is separable in $x$. If this is not the case, $f_{ij}(x)=\tilde{f}_{ij}(x^{p^{r}})$ for some $r>0$ and $\tilde{f}_{ij}(x)-t_i$ separable. Moreover, $f_{ij}(\FF_q)=\tilde{f}_{ij}(\FF_q)$, so we may replace $f_{ij}$ by $\tilde{f}_{ij}$ without altering the sets $S_i$. 

Let $K_{ij}$ be the extension of $\FF_q(t_i)$ generated by a root of $f_{ij}(x)-t_i$, let $N_{ij}$ the Galois closure of $K_{ij}$ over $\FF_{q}(t_i)$, $N_i=\prod_{j=1}^{k_i} N_{ij}$, and $N=N_1\cdots N_n$. 
Then $N/F$ is Galois. Let $G_N=\Gal(N/F)$, $G_{ij}=\Gal(N_{ij}/\FF_{q}(t_i))$, $H_{ij}=\Gal(N_{ij}/K_{ij})$, and $\pi_{i,j}\colon G_N\to G_{ij}$ the restriction map. 

Let $\phi\in \Hom_{\FF_q}(\Ocl_{F},\FF_q)$ be unramified in $\Ocl_N$. 
Then, $\phi_{i}=\phi|_{\FF_q(t_i)}$ is unramified in $N_{ij}$ for all $i,j$. 
We have that $\phi(t_i)=\phi_i(t_i)\in f_{ij}\pa*{\FF_q}$ if and only if $\phi_i$ extends to $\Phi_{ij}\colon \Ocl_{N_{ij}} \to \bar \FF_q$ 
with $\Phi_{ij}(\Ocl_{K_{ij}})=\FF_q$ if and only if $\phi_i$ extends to $\Phi_{ij}\colon \Ocl_{N_{ij}} \to \bar \FF_q$  such that $\frob[\Ocl_{N_{ij}}/\FF_{q}{[t_i]}]{\Phi_{ij}} \in H_{ij}$ if and only if $\frobC{\phi_i} \cap H_{ij} \ne \varnothing$. 
In other words, $\phi(t_i)\in f_{ij}(\FF_q) $ if and only if $\frobC[\Ocl_{N}/\Ocl_F]{\phi} \cap \pi_{ij}^{-1}(H_{ij})\neq \varnothing$. Therefore,
\begin{equation}
    \label{eq:whenphitiinSi}
    \phi(\vect)\in S_1\times\cdots \times S_n \quad \Longleftrightarrow \quad \frobC[\Ocl_{N}/\Ocl_{F}]{\phi} \cap \pi_{ij}^{-1} (H_{ij})\neq \varnothing, \quad \forall i,j.
\end{equation}
Now by \autoref{thm:chebotarev-density-theorem} applied to the $G_{N,0}$-invariant set 
\begin{equation}
\Omega = \{ \sigma\in G_{N,1} : \forall i, j, \; \exists \sigma_0\in G_{N,0} \mbox{ such that } \sigma_0\sigma \sigma_0^{-1} \in \pi^{-1}_{ij}(H_{ij})\} 
\end{equation}
we get that 
\begin{equation}\label{imageofmaps}
    \dchebotarv[1](\Omega) = \frac{\#\Omega}{\#G_{N,1}} +O_{M}(q^{-1/2}),
\end{equation}
where $M= \max_{i,j}\{\deg f_{ij}\}$ and $\cmp(\Ocl_N/\Ocl_F)$ is bounded in terms of $M$.
We finish this example by adding the ramified homomorphisms. 
\begin{equation}
\label{eq:comparisonbetweendchebotarevandsizeofSi}
\begin{split}
    \left|\frac{1}{q^n}\prod_{i=1}^n \#S_i -\dchebotarv[1](\Omega)\right|
    &\leq \frac{1}{q^n}\#\set*{\phi\in \Hom_{\FF_q}(\Ocl_{F},\FF_q) \cond
    \begin{array}{c}
      \phi \text{ ramified in } N \text{ and } \\
      \phi\pa*{\vect} \in S_1\times \cdots \times S_n
    \end{array}
    } \\ 
    & =  O_M(q^{-1}).
\end{split}
\end{equation}
\end{example} 

\section{General Theorem}
\label{sec:general-theorem}
Our most general theorem is stated below. We begin by showing how \autoref{thm:main-squares-coefficients} and \autoref{thm:main-general-coefficients} follow from it. The remainder of the paper focuses on proving this theorem.
\begin{theorem}
  \label{thm:main}
  Let $q$ be an odd prime power, $\vecs\in \Scl_n$, and let $U_1, \dots, U_n \subseteq \FF_q$ be subsets of complexity $\leq M$.
  Let $P_{\vecrv} \in \FF_q\br*{x}$ be a random polynomial as in \eqref{eq:general-random-polynomial} where $\rv_1, \dots, \rv_n$ are  uniformly distributed in $U_1, \dots, U_n$ respectively.
  Then
  \begin{equation}
    \Pr\br*{\SplitType\pa*{P_{\vecrv}} = \vecs} = \pa*{\prod_{i=1}^n i^{s_i} \cdot s_i!}^{-1} + O_{n,M}\pa*{q^{n-1/2} \cdot \#U_1^{-1} \cdots \#U_n^{-1}},
  \end{equation}
  as $q \to \infty$.
\end{theorem}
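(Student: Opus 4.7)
The plan is to realize the count
\[
N_{\vecs}(U_1,\ldots,U_n) := \#\set*{\veca \in U_1\times\cdots\times U_n \cond \SplitType(P(\veca;x)) = \vecs}
\]
via Chebotarev's density theorem applied to the compositum of two function fields --- the splitting field of the generic polynomial (\autoref{ex:generic}) and the Galois closures of the auxiliary polynomials $f_{ij}(y)-t_i$ (\autoref{ex:imagesoffij}) --- and then divide by $\prod_i \#U_i$. As a first reduction I would use the complexity decomposition $U_i = \bigcup_l \bigcap_j f_{ilj}(\FF_q)$ and expand $\onebb_{U_1\times\cdots\times U_n}$ by inclusion--exclusion into a signed sum of $O_{n,M}(1)$ indicators $\onebb_{V_1\times\cdots\times V_n}$ with each $V_i$ of the intersection form $\bigcap_j f_{ij}(\FF_q)$. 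Since $\sum(\pm)\prod_i \#V_i^{(\text{term})} = \prod_i \#U_i$, it suffices to prove $N_{\vecs}(V_1,\ldots,V_n) = (\prod_i i^{s_i} s_i!)^{-1}\prod_i \#V_i + O_{n,M}(q^{n-1/2})$ for each intersection tuple.

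For the intersection case, set $F = \FF_q(\vect)$, let $F_P$ be the splitting field of $P(\vect;x)$ over $F$, let $N = \prod_{i,j} N_{ij}$ where $N_{ij}$ is the Galois closure of $f_{ij}(y)-t_i$ over $\FF_q(t_i)$, and let $K = F_P\cdot N$. Since $F_P = \FF_q(z_1,\ldots,z_n)$ is purely transcendental, $F_P/\FF_q$ is regular. For $\phi \in \Hom_{\FF_q}(\Ocl_F,\FF_q)$ unramified in $K$, \eqref{eq:actionfrobgenericpolynomial} identifies $\SplitType(P(\phi(\vect);x))$ with the cycle type of $\frobC[\Ocl_{F_P}/\Ocl_F]{\phi}$, while \eqref{eq:whenphitiinSi} identifies the event $\phi(\vect) \in V_1\times\cdots\times V_n$ with $\frobC[\Ocl_N/\Ocl_F]{\phi} \in \Omega$, where $\Omega\subseteq G_{N,1}$ is the explicit $G_{N,0}$-invariant set described there.

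Granting the linear disjointness $F_P \cap N = F$ (addressed below), \autoref{thm:chebotarev_lin_dis} applied with $L = F_P$ and $M = N$ yields
\[
\dchebotarv[1](C_{\vecs} \times \Omega) = \dchebotarv(C_{\vecs}) \cdot \dchebotarv[1](\Omega) + O_{n,M}(q^{-1/2}).
\]
Chebotarev on $F_P/F$ together with Cauchy's formula \eqref{eq:conjugacy-class-size} gives $\dchebotarv(C_{\vecs}) = (\prod_i i^{s_i} s_i!)^{-1} + O_n(q^{-1/2})$, and \eqref{eq:comparisonbetweendchebotarevandsizeofSi} gives $q^n\,\dchebotarv[1](\Omega) = \prod_i \#V_i + O_M(q^{n-1})$. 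Multiplying the Chebotarev identity by $q^n$, absorbing the $O_{n,M}(q^{n-1})$ Lang--Weil count of ramified $\phi$, and summing the $O_{n,M}(1)$ inclusion--exclusion terms yields $N_{\vecs}(U_1,\ldots,U_n) = (\prod_i i^{s_i} s_i!)^{-1}\prod_i \#U_i + O_{n,M}(q^{n-1/2})$, whence the theorem upon dividing by $\prod_i \#U_i$.

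The hard step is verifying the linear disjointness $F_P \cap N = F$. Since $F_P \cap N$ is Galois over $F$ and contained in $F_P$, it equals $F_P^H$ for a normal subgroup $H \trianglelefteq \Sym \cong \Gal(F_P/F)$. The cases to eliminate are $H = \{e\}$ (meaning $F_P \subseteq N$), $H = \Alt$ for $n \geq 3$ (meaning $\sqrt{\disc P(\vect)} \in N$), and, when $n = 4$, the Klein-four case $H = V_4$. The case $H=\{e\}$ is ruled out because $\Gal(N/F)$ embeds into the explicit product $\prod_{i,j}\Gal(N_{ij}/\FF_q(t_i))$, whose order and abelianization structure (bounded in terms of $M,n$) cannot accommodate $\Sym$ as a quotient. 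The cases $H = \Alt$ and $H = V_4$ are ruled out by describing the small-degree Galois subextensions of $N/F$ in terms of resolvents of the $f_{ij}$ and showing that neither $\disc P(\vect)$ nor the $V_4$-resolvent matches one of these (modulo squares, resp.\ up to non-abelian quotients). For the paradigmatic squares case $f_{ij}(y)=y^2$ one has $N = F(\sqrt{t_1},\ldots,\sqrt{t_n})$, whose non-trivial quadratic subextensions are $F(\sqrt{\prod_{i\in S}t_i})$ for $\varnothing \neq S \subseteq \{1,\ldots,n\}$; since $\disc P(\vect)$ is an irreducible polynomial of total degree $2(n-1)$ in the $t_i$ and is visibly not a monomial, it avoids this list, and $N/F$ being abelian rules out the $V_4$-resolvent.
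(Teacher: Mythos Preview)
Your Chebotarev-plus-inclusion--exclusion skeleton is exactly the paper's strategy (\autoref{thm:pi-polynomial-coeffients} followed by the proof of \autoref{thm:main}), and the bookkeeping with $\dchebotarv[1](C_{\vecs}\times\Omega)$, the Lang--Weil count of ramified $\phi$, and the recombination $\sum(\pm)\prod_i\#V_i = \prod_i\#U_i$ is all correct. The substantive gap is in the linear disjointness step.

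Your case analysis on the normal subgroup $H\trianglelefteq\Sym$ is the right setup, but none of the three cases is actually dispatched in general. The $H=\{e\}$ argument (``order and abelianization structure \ldots cannot accommodate $\Sym$ as a quotient'') fails outright once $M\ge n$: take $k_1=1$ and $f_{11}\in\FF_q[x]$ of degree $n$ with Galois group $\Sym$ over $\FF_q(t_1)$; then $\Gal(N_{11}/\FF_q(t_1))\cong\Sym$ already surjects onto $\Sym$, so group theory alone cannot exclude $F_P\subseteq N$ --- one needs a genuinely field-theoretic obstruction. For $H=\Alt$ and $H=V_4$ you give only the squares example; the phrase ``describing the small-degree Galois subextensions of $N/F$ in terms of resolvents of the $f_{ij}$'' is not a proof, and the abelianness of $N$ you invoke for the $V_4$ case is special to $f_{ij}(y)=y^2$.

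The paper's replacement (\autoref{thm:linearly-disjoint-extensions}) sidesteps the casework entirely. After base-changing to $\bar\FF_q$ and passing to the Galois closure of $N$ (still decomposable), it suffices to show that $\disc P(\vect)$ is not a square in $L=L_1\cdots L_n$ for \emph{arbitrary} finite $L_i/\bar\FF_q(t_i)$; by \autoref{thm:odd-permutation-in-galois-with-char-not-two} this forces $\Gal(P/L)\not\le\Alt$, hence $=\Sym$ by the normal-subgroup structure. The non-square claim (\autoref{thm:no-square-root-of-disc}) is proved by induction on $n$: specialize $t_n\mapsto 0$, use $\disc(xQ)=t_{n-1}^2\disc Q$ with $Q$ the generic degree-$(n-1)$ polynomial, and apply the inductive hypothesis over $L_1\cdots L_{n-1}$. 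The base case $n=2$ reduces to the observation (\autoref{thm:no-square}) that any finite extension of $\bar\FF_q(u)$ omits $\sqrt{u-\alpha}$ for some $\alpha\in\bar\FF_q$. This argument is uniform in the $f_{ij}$ and is the piece your proposal is missing.
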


\begin{proof}[Proof of \autoref{thm:main-general-coefficients}]
    As $P_{\vecrv}$ is irreducible if and only if $\SplitType\pa*{P_{\vecrv}} = \pa*{0, \dots, 0, 1}$, and in this case $\pa*{\prod_{i=1}^n i^{s_i} \cdot s_i!}^{-1}=1/n$, we see that \autoref{thm:main-general-coefficients} is a special case of \autoref{thm:main}.
\end{proof}

\begin{proof}[Proof of \autoref{thm:main-squares-coefficients}]
  As mentioned above, if $q$ is even, then the coefficients are uniform, hence we are done by \eqref{eq:PPT}.
  If $q$ is odd, then \autoref{thm:main-squares-coefficients} follows from \autoref{thm:main-general-coefficients} applied to the sets $U_1 = \dots = U_n = \set*{\alpha^2 \cond \alpha \in \FF_q}$ which have complexity $\le 2$. (Note that  $\#U_i = \pa*{q + 1}/2$ for all $i$, hence the error term is $O_n(q^{-1/2})$.)
\end{proof}

\section{Linearly Disjoint Extensions}
The goal of this section is \autoref{thm:linearly-disjoint-extensions} which asserts that the splitting field of the generic polynomial is linearly disjoint of any extension that decomposes to extensions of one variable. This is a technical but crucial point in our proof of \autoref{thm:main}.

\begin{definition}
  \label{def:decomposable-extension}
  An extension $N / k\pa*{\vect}$ is called \emph{univariate-decomposable} or simply \emph{decomposable} if there exist finite extensions $N_i / k\pa*{t_i}$, $i=1,\dots,n$, such that $N = N_1 \cdots N_n$.
\end{definition}

\begin{proposition}
  \label{thm:linearly-disjoint-extensions}
  Let $q$ be an odd prime power. Let $P(\vect;x) =x^{n}+t_1x^{n-1}+\cdots +t_n$ be the generic polynomial of degree $n$ over $\FF_q$, let $F_P$ be its splitting field over $\FF_q\pa*{\vect}$, and let $N / \FF_q\pa*{\vect}$ be a decomposable extension.
  Then $F_P$ and $N$ are linearly disjoint over $\FF_q\pa*{\vect}$.
\end{proposition}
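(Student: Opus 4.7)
The plan is to compare the ramification of sub-extensions of $F_P$ with that of $N$ at the discriminant hypersurface $V=\{\disc P=0\}\subseteq \AA^n_{\FF_q}$. Since $F_P/\FF_q(\vect)$ is Galois with group $\Sym$, linear disjointness is equivalent to $F_P\cap N=\FF_q(\vect)$. I will assume for contradiction that $L:=F_P\cap N = F_P^H$ for some proper subgroup $H\lneq\Sym$, and derive a contradiction by showing (a) that any such $L$ is ramified over $\FF_q(\vect)$ at some prime above $V$, while (b) $N$ is unramified over $\FF_q(\vect)$ at $V$. Since $L\subseteq N$ forces the ramification of $L$ to be contained in that of $N$, (a) and (b) together yield the contradiction.

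For (a), I will use the presentation $F_P=\FF_q(z_1,\dots,z_n)$ in which $t_i=(-1)^i e_i(z_1,\dots,z_n)$ and $\Sym$ permutes the $z_i$. The primes of $F_P$ above $V$ are the $\binom{n}{2}$ hyperplanes $\mathfrak{P}_{ij}=\{z_i=z_j\}$, and from $\disc P=\prod_{i<j}(z_i-z_j)^2$ the ramification index of $F_P/\FF_q(\vect)$ at each $\mathfrak{P}_{ij}$ is exactly $2$. The decomposition group at $\mathfrak{P}_{ij}$ equals the stabilizer $\langle(ij)\rangle\times\Sym[\{1,\dots,n\}\setminus\{i,j\}]$, and inspection of its action on the residue field $\FF_q(w,z_k:k\neq i,j)$ (with $w=z_i=z_j$) shows that the inertia is exactly $\langle(ij)\rangle$: the transposition fixes $w$ and every $z_k$ with $k\neq i,j$, while the complementary symmetric group acts faithfully. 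For $L=F_P^H$, the standard subextension formula gives the ramification index of the prime of $L$ below $\mathfrak{P}_{ij}$ as $|I_{\mathfrak{P}_{ij}}|/|I_{\mathfrak{P}_{ij}}\cap H|=2/|\langle(ij)\rangle\cap H|$, which equals $1$ if and only if $(ij)\in H$. Hence $L$ is unramified above every prime of $V$ iff $H$ contains every transposition, iff $H=\Sym$. Since $H\lneq\Sym$ by assumption, (a) follows.

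For (b), each $N_i\cdot\FF_q(\vect)/\FF_q(\vect)$ is the base change of $N_i/\FF_q(t_i)$ along the projection $\AA^n\to\AA^1_{t_i}$, so it ramifies only along ``vertical'' hypersurfaces $\{f(t_i)=0\}$ for $f\in\FF_q[t_i]$; consequently so does $N$. It remains to show $V$ is not vertical. First, $V$ is geometrically irreducible, being the image of the irreducible hyperplane $\{z_1=z_2\}\subseteq \AA^n_z$ under the finite $\Sym$-quotient $\AA^n_z\to\AA^n_z/\Sym=\AA^n_t$. If we had $V=\{f(t_i)=0\}$ for some $i$ and some $f\in\bar\FF_q[t_i]$, then $\disc P$ would lie in $\FF_q[t_i]$; a direct specialization argument excludes this for each $i$ (for $i\neq n$, setting $t_i=0$ yields a polynomial of the same shape whose discriminant depends on the remaining $t_j$; for $i=n$, setting $t_n=0$ gives $P=xQ$ with $\disc P|_{t_n=0}=t_{n-1}^2\cdot\disc Q$, which is likewise nonconstant in $t_1,\dots,t_{n-1}$). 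Hence $V$ is not vertical and $N$ is unramified at $V$, which contradicts (a) together with $L\subseteq N$.

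The main technical difficulty is the identification of the inertia at the generic point of $V$ as a single transposition, together with the group-theoretic observation that the $H$-orbit of a transposition lies entirely in $H$ or entirely outside $H$, so that ``unramified above $V$'' forces $H$ to contain \emph{every} transposition. The hypothesis $q$ odd is inherited from the main theorem and ensures the standard tame ramification formulas apply transparently to the order-$2$ inertia at $V$.
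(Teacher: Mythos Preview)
Your argument is correct and takes a genuinely different route from the paper's proof.

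The paper proceeds algebraically: it passes to $L=N\bar\FF_q$, replaces $N$ by its normal closure, and then shows by an induction on $n$ (specializing $t_n\mapsto 0$ and $t_1\mapsto 2\sqrt{\alpha}$) that $\disc P$ is not a square in $L$. By \autoref{thm:odd-permutation-in-galois-with-char-not-two} this forces $\Gal(P/L)\not\le\Alt$; since $\Gal(P/L)\trianglelefteq\Sym$, one concludes $\Gal(P/L)=\Sym$, and the linear disjointness follows. Your approach instead analyses ramification at the generic point of $V=\{\disc P=0\}$: you identify each inertia group as a single transposition $(ij)$, so that $F_P^H$ is unramified over $V$ only when $H$ contains every transposition, i.e.\ $H=\Sym$; and you observe that a decomposable $N$ can ramify only along ``vertical'' divisors $\{f(t_i)=0\}$, which $V$ is not. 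The two arguments pivot on different structural facts about $\Sym$: the paper uses that every proper normal subgroup lies in $\Alt$, while you use that the transpositions generate $\Sym$. Your ramification viewpoint is more geometric and avoids the inductive specialization lemma and the passage to the normal closure of $N$; the paper's argument is more elementary in that it needs no ramification theory of higher-dimensional varieties. One minor point: your sentence ``$\disc P$ would lie in $\FF_q[t_i]$'' and the ensuing specialization sketch is a bit loose---what you actually need (and what is immediate) is that the irreducible divisor $V$ is not a fiber $\{t_i=\alpha\}$, which follows since the composition $\{z_1=z_2\}\to V\to\AA^1_{t_i}$ is visibly dominant for each $i$.
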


We start with some auxiliary lemmas, keeping the notation and assumptions of the proposition. We also set
\begin{equation}
  \vect_{\hat i} = \pa*{t_1, \dots, t_{i-1}, t_{i+1}, \dots, t_n} 
\end{equation}
and 
we let $L=N\bar\FF_q$ and $L_i=N_i\bar\FF_q(t_i)$, where $\bar\FF_q$ is the algebraic closure of $\FF_q$. Then, $L/\bar\FF_q(\vect)$ is decomposable, since  $L=L_1\cdots L_n$.

\begin{lemma}
  \label{thm:no-square}
  Let $L_u$ be a finite extension of $\bar \FF_q\pa*{u}$.
  Then there exists $\alpha \in \bar\FF_q$ such that $u - \alpha$ is not a square in $L_u$.
\end{lemma}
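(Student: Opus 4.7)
The plan is to argue by contradiction. Suppose that $u - \alpha$ is a square in $L_u$ for every $\alpha \in \bar\FF_q$; I will derive that $[L_u : \bar\FF_q(u)]$ must be infinite, contradicting the hypothesis. The engine is Kummer theory applied to the multiplicative group $\bar\FF_q(u)^\ast$ modulo squares. This requires the base characteristic to be $\neq 2$, which is available because the ambient \autoref{thm:linearly-disjoint-extensions} assumes $q$ is odd.

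The first step is to show that for any distinct $\alpha_1, \ldots, \alpha_N \in \bar\FF_q$ the classes of $u - \alpha_1, \ldots, u - \alpha_N$ are $\FF_2$-linearly independent in $\bar\FF_q(u)^\ast / (\bar\FF_q(u)^\ast)^2$. This is a short valuation check: for any product $f = \prod_{i=1}^N (u - \alpha_i)^{\epsilon_i}$ with $\epsilon_i \in \{0,1\}$, the valuation of $f$ at the place $u = \alpha_j$ equals $\epsilon_j$, and a square must have even valuation at every place, forcing $\epsilon_j = 0$ for all $j$. Combined with $\operatorname{char} \neq 2$, Kummer theory then gives
\begin{equation}
  [\bar\FF_q(u)(\sqrt{u - \alpha_1}, \ldots, \sqrt{u - \alpha_N}) : \bar\FF_q(u)] = 2^N.
\end{equation}

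Under the contradictory assumption, each $\sqrt{u - \alpha_i}$ lies in $L_u$, so the multi-quadratic field above embeds in $L_u$, forcing $2^N \leq [L_u : \bar\FF_q(u)] < \infty$. Since $\bar\FF_q$ is infinite, $N$ can be taken arbitrarily large, contradicting the finiteness of $L_u / \bar\FF_q(u)$. I do not expect a serious obstacle: the whole argument is essentially one paragraph, and the only point requiring attention is the invocation of $\operatorname{char} \neq 2$ at the Kummer-theoretic step, which is guaranteed by the standing hypothesis that $q$ is odd (indeed, in characteristic $2$ every $u - \alpha$ is a square in $\bar\FF_q(\sqrt{u})$, so the statement fails and some oddness assumption is genuinely needed).
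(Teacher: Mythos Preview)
Your proof is correct and is essentially the same argument as the paper's: both observe that $\bar\FF_q(u)\pa*{\sqrt{u-\alpha}\mid \alpha\in\bar\FF_q}$ is an infinite extension of $\bar\FF_q(u)$ and hence cannot sit inside the finite extension $L_u$. The paper asserts this infinitude in one line without justification, whereas you supply the Kummer-theoretic and valuation details that make it rigorous; you also make explicit the reliance on odd characteristic, which the paper leaves to the ambient hypotheses.
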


\begin{proof}
  As $E=\bar\FF_q(u)\pa*{\sqrt{u-\alpha}\cond \alpha\in \bar\FF_q}$ is an infinite extension of $\bar\FF_q(u)$, and $L_u$ is a finite extension, it follows that $E\not\subseteq L_u$. In particular, there exists $\alpha\in \bar\FF_q(u)$ such that $\sqrt{u-\alpha} \not\in L_u$.  
\end{proof}

\begin{lemma}
  \label{thm:no-square-root-of-disc}
  Assume $n\geq 2$. 
  Then $\disc P$ is not a square in $L$.
\end{lemma}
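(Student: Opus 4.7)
My plan is to argue by contradiction: assume $\sqrt{D}\in L$, where $D = \disc P$, and derive a contradiction using Kummer theory on a Galois closure of $L$ together with the irreducibility of the universal discriminant $D \in \bar\FF_q[\vect]$.

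First I pass to a Galois closure. Replace each $L_i$ by its maximal separable subextension over $\bar\FF_q(t_i)$ (harmless since $q$ is odd and $\sqrt{D}$ is separable over $\bar\FF_q(\vect)$), and let $L_i^{\mathrm{gal}}$ be the Galois closure of $L_i$ over $\bar\FF_q(t_i)$, with Galois group $G_i$. Each composite $\tilde L_i := L_i^{\mathrm{gal}} \cdot \bar\FF_q(\vect)$ is Galois over $\bar\FF_q(\vect)$ with the same group $G_i$, and because the algebraic closure of $\bar\FF_q(t_i)$ inside $\bar\FF_q(\vect)$ is $\bar\FF_q(t_i)$ itself, the $\tilde L_i$ are pairwise linearly disjoint over $\bar\FF_q(\vect)$. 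Hence their compositum $\tilde L := \tilde L_1\cdots\tilde L_n\supseteq L$ is Galois over $\bar\FF_q(\vect)$ with group $\prod_{i=1}^n G_i$, and it suffices to show $\sqrt{D}\notin\tilde L$. The quadratic subextension $\bar\FF_q(\vect)(\sqrt D)\subseteq \tilde L$ corresponds to a nontrivial character $\chi\colon \prod_i G_i\to \mathbb{Z}/2\mathbb{Z}$ of the form $\chi(\sigma_1,\ldots,\sigma_n) = \sum_i \chi_i(\sigma_i)$ for characters $\chi_i\colon G_i\to \mathbb{Z}/2\mathbb{Z}$. Each nontrivial $\chi_i$ cuts out a quadratic subfield of $L_i^{\mathrm{gal}}$ of the form $\bar\FF_q(t_i)(\sqrt{d_i(t_i)})$ for some squarefree nonsquare $d_i(t_i)\in\bar\FF_q[t_i]$ (take $d_i=1$ when $\chi_i$ is trivial), and Kummer theory yields
\begin{equation}
  D \equiv \prod_{i=1}^n d_i(t_i) \pmod{\bigl(\bar\FF_q(\vect)^\times\bigr)^2}.
\end{equation}

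To finish, I use the irreducibility of $D$ in $\bar\FF_q[\vect]$: the discriminant locus $\{D=0\}\subseteq \AA^n$ is the image of the morphism $\AA^1\times \AA^{n-2}\to \AA^n$ sending $(z,Q)$ (with $Q$ monic of degree $n-2$) to the coefficient vector of $(x-z)^2 Q(x)$, hence is irreducible; as a hypersurface it is cut out by a single irreducible polynomial, which by a degree check must be $D$ up to a scalar. Apply the $D$-adic valuation $v_D$ to the displayed congruence: $v_D(D) = 1$, odd. For $n\geq 2$, the polynomial $D$ has positive degree in more than one $t_i$ and so is not of the form $t_i-\beta$ for any $i,\beta$; hence $v_D(t_i-\beta) = 0$ for every linear factor of every $d_i(t_i)$, and the right-hand side has valuation $0$, even. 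This contradicts the parity and completes the proof.

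The main obstacle is Step 2: rigorously establishing the linear disjointness of the $\tilde L_i$ over $\bar\FF_q(\vect)$ and identifying each quadratic subextension as a square root of a univariate polynomial in $\bar\FF_q[t_i]$. The irreducibility of the universal discriminant in Step 3 is classical but benefits from the short geometric argument above. This plan does not use \autoref{thm:no-square}; a plausible alternative would replace Step 3 by specializing $n-1$ of the variables to generic constants and invoking \autoref{thm:no-square} on the resulting finite extension of $\bar\FF_q(t_i)$.
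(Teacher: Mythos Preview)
Your argument is correct in outline and genuinely different from the paper's. The paper proceeds by induction on $n$: for $n=2$ it writes $\disc P = t_1^2 - 4t_2$, invokes \autoref{thm:no-square} to find $\alpha$ with $t_2-\alpha$ a nonsquare in $L_2$, and specializes $t_1\mapsto 2\sqrt{\alpha}$; for $n>2$ it specializes $t_n\mapsto 0$, uses $\disc(xQ)=t_{n-1}^2\disc Q$, and applies the inductive hypothesis to the generic polynomial $Q$ of degree $n-1$ over $L_1\cdots L_{n-1}$. So the paper's proof is an elementary specialization/induction argument that leans on \autoref{thm:no-square} and never needs the global structure of $\disc P$. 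Your route instead exploits two structural facts---the product decomposition $\Gal(\tilde L/\bar\FF_q(\vect))\cong\prod_i G_i$ and the irreducibility of the universal discriminant---and finishes with a clean valuation/parity contradiction, bypassing \autoref{thm:no-square} entirely (as you note). Your approach is more conceptual and makes transparent \emph{why} a decomposable extension cannot capture $\sqrt{\disc P}$: the only quadratic classes available in such an extension are products of univariate ones, while $\disc P$ is a genuinely multivariate irreducible. The paper's approach is more self-contained, requiring no external input about $\disc P$.

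Two small points worth tightening. First, your ``degree check'' for the irreducibility of $D$ is not quite an argument: the image being irreducible only gives $D=cP^k$, and you need $k$ odd (equivalently, $D$ not a perfect square in $\bar\FF_q[\vect]$) for the parity step, which you should justify separately (e.g.\ via $\Gal(P/\bar\FF_q(\vect))=\Sym$ or a direct degree-in-$t_n$ computation); full irreducibility is classical but not needed. Second, ``pairwise linearly disjoint'' does not in general imply joint linear disjointness; here it does, by inducting on the regular extensions $L_i^{\mathrm{gal}}/\bar\FF_q$ and using that a regular extension is linearly disjoint from any other extension, and you should say so.
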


\begin{proof}
  Recall that  $L = L_1 \cdots L_n$, where  $L_i / \bar\FF_q\pa*{t_i}$ are  finite extensions, $i=1,\dots,n$.
  For each $i=1,\dots,n$, let $\Ocl_{L_i}$ be the integral closure of $\bar\FF_q\br*{t_i}$ in $L_i$ and let $\Ocl_L$ be the integral closure of $\bar\FF_q\br*{\vect}$ in $L$.

  We proceed by induction on $n\geq 2$.
  Assume $n=2$. Then we have that
  \begin{equation}
    \disc P = \disc (x^2+t_1x+t_2) = t_1^2 - 4 t_2.
  \end{equation}
  Since $L_2 / \bar \FF_q\pa*{t_2}$ is finite, by \autoref{thm:no-square} there exists $\alpha \in \bar \FF_q$ such that $t_2 - \alpha$ is not a sqaure in $L_2$.
  Extend the specialization $(t_1,t_2)\mapsto (2\sqrt{\alpha},t_2)$ to a homomorphism $\varphi \colon \Ocl_L \to L_2$. 
 Then, $\phi(\disc P) = -4\pa*{t_2 - \alpha}$ which is not a square in $L_2$. This implies that $\disc P$ is not a square in $L$, as needed. 

  Next let $n > 2$. Consider the  polynomials
  \begin{equation}
    \label{eq:sp:hat-p-and-q}
    \begin{aligned}
      \hat P \pa*{\vect_{\hat n}; x}
        &= P\pa*{\vect_{\hat n}, 0; x}
        = x^n + t_1 x^{n-1} + \dots + t_{n-1}x, \quad\text{and} \\
      Q \pa*{\vect_{\hat n}; x}
        &= \hat P\pa*{\vect_{\hat n}; x} / x = x^{n-1} + t_1 x^{n-2} + \dots + t_{n-1}.
    \end{aligned}
  \end{equation}
  Then, by the definition of discriminants and resultants we have
  \begin{equation}
      \disc \hat P = \disc \pa*{xQ} = \Res\pa*{x, Q}^2 \disc Q = t_{n-1}^2 \disc Q.
  \end{equation}
  Set $L' = L_1 \cdots L_{n-1}$.
  The induction hypothesis for $n-1$ implies that $\disc Q$ is not a square in $ L'$, hence $\disc \hat P$ is not a square in $L'$. Since $\disc \hat P$ is obtained by substituting $t_n=0$ in $\disc P$, we deduce that  $\disc P$ is not a square in $ L$.
\end{proof}

\begin{lemma}
  \label{thm:geometric-galois-group}
  The Galois group of $P$ over $L$ is $\Sym$.
\end{lemma}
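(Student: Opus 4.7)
The plan is to reduce the lemma to showing $L\cap F_P=\bar\FF_q\pa*{\vect}$; by the theorem of natural irrationalities this is equivalent to $\Gal\pa*{LF_P/L}=\Sym$, since $F_P/\bar\FF_q\pa*{\vect}$ is Galois with group $\Sym$ (the generic polynomial has the full symmetric group as its Galois group). The case $n=1$ is trivial, so I would assume $n\ge 2$ throughout.

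The first step is to replace $L$ by its Galois closure $\tilde L$ over $\bar\FF_q\pa*{\vect}$, since $L$ itself need not be Galois. A straightforward check—namely, that the Galois closure of a compositum is the compositum of the Galois closures—gives $\tilde L=\tilde L_1\cdots \tilde L_n$, where $\tilde L_i$ is the Galois closure of $L_i$ over $\bar\FF_q\pa*{t_i}$. In particular $\tilde L$ is again of the form $L_1'\cdots L_n'$ with each $L_i'/\bar\FF_q\pa*{t_i}$ finite. Inspecting the proof of \autoref{thm:no-square-root-of-disc} shows that it uses only this decomposable structure (not the particular form $L=N\bar\FF_q$), so the same argument applied to $\tilde L$ yields that $\disc P$ is not a square in $\tilde L$.

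The second step analyzes the intersection $M:=F_P\cap \tilde L$. Because $F_P/\bar\FF_q\pa*{\vect}$ and $\tilde L/\bar\FF_q\pa*{\vect}$ are both Galois, so is $M/\bar\FF_q\pa*{\vect}$, whence $M=F_P^H$ for some normal subgroup $H\trianglelefteq \Sym$. The key group-theoretic input is the classification of normal subgroups of $\Sym$: every proper normal subgroup is contained in $\Alt$. For $n\ne 4$ the only possibilities are $\{e\}$ and $\Alt$; for $n=4$ the additional normal subgroup $V_4$ also satisfies $V_4\subset A_4$. Hence if $H\ne \Sym$ then $\sqrt{\disc P}\in F_P^{\Alt}\subseteq M\subseteq \tilde L$, contradicting the first step. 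So $M=\bar\FF_q\pa*{\vect}$, and since $L\cap F_P\subseteq \tilde L\cap F_P=M$, this gives $L\cap F_P=\bar\FF_q\pa*{\vect}$, which completes the proof.

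The only genuine obstacle is the bookkeeping in step one: confirming that the Galois closure of a decomposable extension is itself decomposable in the sense demanded by \autoref{thm:no-square-root-of-disc}, and that the proof of that lemma transfers verbatim. Both are routine once laid out, after which the remainder is a short application of Galois theory plus the normal-subgroup classification for $\Sym$.
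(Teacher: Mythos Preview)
Your proposal is correct and follows essentially the same approach as the paper: pass to the Galois (normal) closure of $L$, observe it is again decomposable, invoke \autoref{thm:no-square-root-of-disc} to rule out $\sqrt{\disc P}\in\tilde L$, and then use the classification of normal subgroups of $\Sym$ to force the intersection $F_P\cap\tilde L$ to be trivial. The only cosmetic difference is that the paper phrases the conclusion as ``$G_L$ is a normal subgroup of $\Sym$ not contained in $\Alt$'' rather than ``$F_P\cap\tilde L=\bar\FF_q(\vect)$'', but these are equivalent via natural irrationalities, exactly as you note.
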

\begin{proof}
    Let $F = \bar \FF_q\pa*{\vect}$. Let $\hat{L}$ be the normal closure of $L/F$, then $\hat{L}$ is also decomposable (since if $L=L_1\cdots L_n$, then  $\hat{L} = \hat{L}_1\cdots \hat{L}_n$), with $\hat{L}_i$ the normal closure of $L_1/\FF_q\pa*{t_i}$. As the Galois group $\hat{G}$ of $P$ over $\hat{L}$ is a subgroup of its Galois group over $L$, it suffices to prove that $\hat{G}=\Sym$. Hence, without loss of generality, we may assume that $L/F$ is normal.
    
  Let $F_P$ and $L_P$ be the splitting fields of $P$ over $F$ and $L$, respectively.   We set $G_{F} = \Gal\pa*{F_P / F}$ and $G_L = \Gal\pa*{L_P / L}$. Then $G_F\cong \Sym$ since $P$ is the generic polynomial. 
  
  Since $F_P/F$ and $L/F$ are normal, so is $E = F_P \cap L$. Identifying $G_L=\Gal(F_P/E)$ via the restriction map, gives that $G_L$ is a normal subgroup of $G_F=\Sym$. 

  \begin{figure}[htb]
    \centering
    \begin{tikzpicture}
      \node (LP) at (0, 0) {$L_P$};
      \node (KP) at (-2, -2) {$F_P$};
      \node (L) at (2, -2) {$L$};
      \node (E) at (0, -4) {$E = F_P \cap L$};
      \node (K) at (0, -6) {$F$};
      \draw (KP) -- (LP);
      \draw (L) -- (LP) node [midway, above right] {$G_L$};
      \draw (E) -- (KP) node [midway, above right] {$\cong G_L$};
      \draw (E) -- (L);
      \draw [bend left=45] (K) to node [midway, left] {$G_F \cong \Sym$} (KP) ;
      \draw (K) -- (E);
    \end{tikzpicture}
    \label{fig:ggps:field-extensions}
  \end{figure}
  
  Since all the proper normal subgroups of $\Sym$ are contained in $\Alt$, it suffices to prove that $G_L\not\leq \Alt$ to deduce that $G_L=\Sym$. 
  And indeed,  \autoref{thm:no-square-root-of-disc} and \autoref{thm:odd-permutation-in-galois-with-char-not-two} imply that $G_L\not\leq \Alt$, so $G_L \cong \Sym$.
\end{proof}

\begin{proof}[Proof of \autoref{thm:linearly-disjoint-extensions}]
  Set $K = \FF_q\pa*{\vect}$.
  Let $K_P$, $N_P$ and $L_P$ be the splitting fields of $P$ over $K$, $N$, and $L$, respectively, and let $G_K, G_N, G_L$ be the Galois groups of $K_P/K$, $N_P/N$, and $L_P/L$. We identify these groups with their images under the restriction of automorphism map: $G_N\cong \Gal(K_P/N\cap K_P)$ and $G_L\cong \Gal(K_P/L\cap K_P)$, so that $G_L\leq G_N\leq G_K\cong \Sym$. By \autoref{thm:geometric-galois-group}, $G_L=\Sym$, hence $G_N=\Sym=G_K$. In particular, $N\cap K_P=K$, and so $N$ and $K_P$ are linearly disjoint.   
 \end{proof}

\section{Proof of \autoref{thm:main} }
\label{sec:main}
We first treat a special case, and then deduce the theorem.

\begin{definition}
  Let $M>0$. We say that  a subset $S \subseteq \FF_q$ is of \emph{$\pi$-complexity $\leq M$} if it has a representation as in \eqref{eq:complexityM} with $m=1$.
\end{definition}

There are some obvious connections between the $\pi$-complexity and complexity of sets. First, if the $\pi$-complexity is $\leq M$, then so is the complexity.
Second, if $U$ is of complexity $\leq M$ then it is a union of at most $M$ sets of $\pi$-complexity $\leq M$. Another immediate property that will be used later is that if $S_i$ is of $\pi$-complexity $\leq M_i$, $i=1,\ldots r$, then 
\begin{equation}\label{eq:picmpunderintersection}
    \bigcap_{i=1}^r S_i \text{ is of $\pi$-complexity }\leq \sum_{i=1}^r M_i.
\end{equation}

\begin{lemma}
  \label{thm:pi-polynomial-coeffients}
  Let $q$ be an odd prime power.
  Let $S_1, \dots, S_n \subseteq \FF_q$ be nonempty and of $\pi$-complexity $\leq M$.
  Let $P_{\vecrv} \in \FF_q\br*{x}$ be a random polynomial as in \eqref{eq:general-random-polynomial} where $\rv_1, \dots, \rv_n$ are uniformly distributed in $S_1, \dots, S_n$ respectively.
  Then for all $\vecs \in \Scl_n$ we have that
  \begin{equation}\label{eq:bdinmainlemma}
    \Pr\br*{\SplitType\pa*{P_{\vecrv}} = \vecs} = \pa*{\prod_{i=1}^n i^{s_i} \cdot s_i!}^{-1} + O_{n,M}\pa*{q^{n-1/2} \cdot \#S_1^{-1} \cdots \#S_n^{-1}},
  \end{equation}
  as $q \to \infty$.
\end{lemma}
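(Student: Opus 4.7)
The plan is to express the probability as a count of specializations $\phi\in \Hom_{\FF_q}\pa*{\FF_q\br*{\vect},\FF_q}$ of the generic polynomial and then to apply Chebotarev's theorem to the compositum $K = F_P\cdot N$, where $F_P$ is the splitting field of the generic polynomial $P\pa*{\vect;x}$ over $F=\FF_q\pa*{\vect}$ (as in \autoref{ex:generic}) and $N = \prod_{i,j} N_{ij}$ is the decomposable extension assembled from the polynomials $f_{ij}$ that cut out the $S_i$'s (as in \autoref{ex:imagesoffij}). The essential ingredient that makes the two densities factor cleanly is the linear disjointness of $F_P$ and $N$ provided by \autoref{thm:linearly-disjoint-extensions}.

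Concretely, fix representations $S_i=\bigcap_{j=1}^{k_i}f_{ij}\pa*{\FF_q}$ with $k_i,\deg f_{ij}\leq M$, so that
\begin{equation}
\Pr\br*{\SplitType\pa*{P_{\vecrv}}=\vecs} = \frac{1}{\#S_1\cdots\#S_n}\cdot \#\set*{\phi \cond \phi\pa*{t_i}\in S_i \text{ and } \SplitType\pa*{P^\phi}=\vecs}.
\end{equation}
By \eqref{eq:actionfrobgenericpolynomial}, an unramified $\phi$ satisfies $\SplitType\pa*{P^\phi}=\vecs$ iff $\frob[\Ocl_{F_P}/\Ocl_F]{\phi}\in C_{\vecs}$, and by \eqref{eq:whenphitiinSi} it satisfies $\phi\pa*{t_i}\in S_i$ for every $i$ iff $\frob[\Ocl_N/\Ocl_F]{\phi}\in \Omega$, where $\Omega$ is the $G_{N,0}$-invariant set introduced in \eqref{imageofmaps}. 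Every $\phi\in\Hom_{\FF_q}\pa*{\FF_q\br*{\vect},\FF_q}$ is automatically surjective, so up to the $O_{n,M}\pa*{q^{n-1}}$ contribution of $\phi$ ramified in $K$ (bounded via Lang--Weil), the numerator equals $q^n\cdot \dchebotarv[1]\pa*{C_{\vecs}\times \Omega}$.

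Since $F_P/\FF_q$ is regular and $F_P,N$ are linearly disjoint over $F$ by \autoref{thm:linearly-disjoint-extensions}, \autoref{thm:chebotarev_lin_dis} applied to $K=F_PN$ (whose complexity is bounded in terms of $n,M$, because $\br*{F_P:F}\leq n!$ and $\br*{N:F}$ is bounded by a function of $n,M$) gives, after summing over the $G_{N,0}$-orbits that make up $\Omega$,
\begin{equation}
\dchebotarv[1]\pa*{C_{\vecs}\times \Omega} = \dchebotarv\pa*{C_{\vecs}}\cdot \dchebotarv[1]\pa*{\Omega} + O_{n,M}\pa*{q^{-1/2}}.
\end{equation}
Inserting $\dchebotarv\pa*{C_{\vecs}} = \pa*{\prod_i i^{s_i}s_i!}^{-1} + O_n\pa*{q^{-1/2}}$ from \autoref{ex:generic} and $\dchebotarv[1]\pa*{\Omega} = q^{-n}\prod_i \#S_i + O_M\pa*{q^{-1}}$ from \eqref{eq:comparisonbetweendchebotarevandsizeofSi}, and using that both densities are bounded by $1$ so that the cross-error of the product is $O_{n,M}\pa*{q^{-1/2}}$, the numerator becomes $\pa*{\prod_i i^{s_i}s_i!}^{-1}\cdot \prod_i \#S_i + O_{n,M}\pa*{q^{n-1/2}}$; dividing by $\#S_1\cdots\#S_n$ produces the claimed expansion \eqref{eq:bdinmainlemma}.

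The only nontrivial step is the appeal to \autoref{thm:linearly-disjoint-extensions}, which supplies the linear disjointness needed for the product form of the density. Once that is in hand, the argument reduces to two independent Chebotarev computations already packaged in the examples of \autoref{sec:preliminaries}, with all implied constants controlled by the uniform bounds on the complexities of the fields involved.
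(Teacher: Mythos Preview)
Your proof is correct and follows essentially the same route as the paper: you set up the compositum $K=F_PN$, invoke \autoref{thm:linearly-disjoint-extensions} for linear disjointness, apply \autoref{thm:chebotarev_lin_dis} to factor $\dchebotarv[1]\pa*{C_{\vecs}\times\Omega}$, and then feed in the two densities computed in \autoref{ex:generic} and \autoref{ex:imagesoffij}. The paper presents the steps in a slightly different order (first computing $\dchebotarv[1]\pa*{C_{\vecs}\times\Omega}$ and only afterwards translating back to the count of $\veca\in S_1\times\cdots\times S_n$), but the substance is identical.
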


\begin{proof}
    Let $F=\FF_q(\vect)$ and $\Ocl_F=\FF_q[\vect]$. Let $P(\vect;x)=x^n+t_1x^{n-1}+\cdots +t_n$ be the generic polynomial, $F_P$ its splitting field over $F$, and $\Ocl_{F_P}$ the integral closure of $\Ocl_F$ in $F_P$. In particlar, $F_P/\FF_q$ is regular. 

    By assumption, there exist $f_{11}, \dots, f_{1k_1}, \dots, f_{n1}, \dots, f_{nk_n} \in \FF_q\br*{x}$  such that $k_i \leq  M$, $1\leq \deg f_{ij} \leq  M$ for all $i, j$ such that  
    \begin{equation}
        S_i = \bigcap_{j=1}^{k_i} f_{ij}(\FF_q), \qquad i=1,\ldots, n.
    \end{equation}
    As in \autoref{ex:imagesoffij} we may assume that 
    $f_{ij}(x)-t_i$ is separable in $x$. We also assume the notation of \autoref{ex:imagesoffij}. In particular, as $N=N_1\cdots N_n$, we have that $N$ is decomposable. 
    By \autoref{thm:linearly-disjoint-extensions}, $N$ and $F_P$ are linearly disjoint over $F$. Let $C_{\vecs}\subseteq \Gal(F_P/F)\cong \Sym$ be the conjugacy class of all permutations with splitting type $\vecs$ and let $\Omega\subseteq \Gal(N/F)_1$ be the $G_0$-invariant subset defined in \autoref{ex:imagesoffij}. Then, by \autoref{thm:chebotarev_lin_dis}, \eqref{eq:computation_generic}, and \eqref{imageofmaps}, we have
    \[
        \dchebotarv[1](C_{\vecs}\times \Omega) = \pa*{\prod_{i=1}^n i^{s_i} \cdot s_i!}^{-1} \cdot \frac{\#\Omega}{\#G_{N,1}} +O_{n,M}\pa*{q^{-1/2}}.
    \]
    By \eqref{eq:comparisonbetweendchebotarevandsizeofSi} and since $\pa*{\prod_{i=1}^n i^{s_i} \cdot s_i!}^{-1}\leq 1$, we get that
    \[
        \dchebotarv[1](C_{\vecs}\times \Omega) = \pa*{\prod_{i=1}^n i^{s_i} \cdot s_i!}^{-1} \cdot \frac{\#S_1\cdots \#S_n}{q^n} +O_{n,M}\pa*{q^{-1/2}}.
    \]
    We identify $\Hom_{\FF_q}(\FF_{q}[\vect],\FF_q) \cong \FF_q^n$ by the bijection $\phi\mapsto \veca:= \phi(\vect)$. For $\phi$ unramified in $F_PN$, We have   $\frobC[\Ocl_{F_pN}/\Ocl_{F}]{\phi} \in C_{\vecs}\times \Omega$ if and only if $\frobC[\Ocl_{F_p}/\Ocl_{F}]{\phi} = C_{\vecs}$ and $\frobC[\Ocl_{N}/\Ocl_{F}]{\phi}\in \Omega$. The former condition is equivalent to $\SplitType\pa*{P(\veca; x)} = s$ by \autoref{ex:generic} and the latter to $\veca\in S_1\times \cdots \times S_n$ by \autoref{ex:imagesoffij}. Since there are at most 
     $O_{n,M}(q^{n-1})$ many ramified $\phi$,  we deduce that  
    \begin{multline}
        \pa*{\prod_{i=1}^n i^{s_i} \cdot s_i!}^{-1} 
        = \frac{\dchebotarv[1](C_{\vecs}\times \Omega)}{\#S_1\cdots \#S_n/q^n} + O_{n,M}(q^{n-1/2}\#S_1^{-1}\cdots \#S_n^{-1})\\
        = \frac{\# \{ \veca \in \FF_q^n : \SplitType\pa*{P(\veca;x)}=\vecs \text{ and } \veca \in S_1\times \cdots \times S_n\}}{\#S_1\cdots \#S_n}\\
         +O_{n,M}(q^{n-1/2}\#S_1^{-1}\cdots \#S_n^{-1}).
    \end{multline}
    This finishes the proof as the coefficients $P_{\vecrv}$ are sampled uniformly from $S_1\times \cdots \times S_n$. 
\end{proof}

\begin{proof}[Proof of \autoref{thm:main}]
    For each $i=1,\ldots, n$, the set $U_i$ is of complexity $\leq M$. This means that there exists $m_i\leq M$ and there exist $S_{i1},\ldots, S_{im_i}\subseteq \FF_q$  of $\pi$-complexity $\leq M$ such that $U_i=\bigcup_{j=1}^{m_i}S_{ij}$. 

    If we set 
    \begin{equation}
        \Fcl = \set*{\hat{S}= S_{1j_1} \times \dots \times S_{nj_n} \cond 1 \le j_i \le m_i},
    \end{equation}
  then
  \[
    \hat{U}:= U_1\times \cdots \times U_n=\bigcup_{\hat{S}\in \Fcl} \hat{S}.
  \]
  Let $E$ be the event that $\SplitType(P_{\vecrv})=\vecs$. 
  Also, for simplicity we identify subsets $\hat S \subseteq \FF_q^n$ with the event that $\vecrv \in \hat S$. 
    By the inclusion-exclusion principle we get
  \begin{equation}
  \label{eq:incecl}
    \begin{split}
        \PP\br*{E} &= \sum_{i=1}^{\#\Fcl} (-1)^{i-1} \sum_{\substack{\hat{S}_1,\ldots, \hat{S}_i\in \Fcl\\ \text{distinct}}} \PP\br*{E\cap \bigcap_{k=1
        }^i \hat{S}_k} 
    \end{split}
    \end{equation}
    To ease notation,  we fix for a moment one tuple $(\hat{S}_1,\ldots, \hat{S}_i)$ of distinct elements of $\mathcal{F}$, and write  $\hat{S}=\bigcap_{k=1
    }^i \hat{S}_k$. Since $\hat{S}\subseteq \hat{U}$ and $\vecrv$ is uniform on $\hat{U}$,  we have that 
    \begin{equation}\label{eq:sizeofS}
    \PP\br*{\hat{S}} = \frac{\#\hat S}{\#\hat U}.
    \end{equation}
    Also, if we condition on $\vecrv$ to be in $\hat{S}$, then it distributes  uniformly on $\hat{S}$. Hence, we may apply \autoref{thm:pi-polynomial-coeffients}, noting that by  \eqref{eq:picmpunderintersection}, the $\pi$-complexity of each factor of $\hat{S}$ is $\leq \#\Fcl\cdot M=O_{M}( 1)$ and using \eqref{eq:sizeofS}:  \begin{equation}\label{eq:EcapS}
        \PP\br*{E\cap\hat{S}} =\PP\br*{E\cond\hat{S}}\PP\br*{\hat{S}}  
        = C\cdot \PP\br*{\hat{S}} +O_{n,M}(q^{-1/2}\#\hat{U}^{-1}) ) .
\end{equation}
    where $C= \pa*{\prod_{i=1}^n i^{s_i} \cdot s_i!}^{-1}$. 
    Plugging \eqref{eq:EcapS} into \eqref{eq:incecl} and noting that the sum in the latter has $O_{n,M}(1)$ terms, we deduce that 
    \[
        \PP\br*{E} = C\sum_{i=1}^{\#\Fcl}(-1)^{i-1} \sum_{\substack{\hat{S}_1,\ldots, \hat{S}_i\in \Fcl\\ \text{distinct}}}  \PP\br*{\bigcap_{k=1}^i \hat{S}_k} 
        + O_{n,M}\pa*{q^{-1/2}\#\hat{U}^{-1} }=C+O_{n,M}\pa*{q^{-1/2}\#\hat{U}^{-1}},
    \]
    where in the last equation we used the inclusion-exclusion principle for the event $\hat{U}$. 
\end{proof}

\section{Examples of large size sets of bounded complexity}
\label{sec:size-of-complex-sets}

In \autoref{thm:main-general-coefficients} and \autoref{thm:main}, the error term may be larger than the main term. This happens if  $\#U_i \ll_M q^{1/2}$ for at least one $i$. One the other hand, if $\#U_i\gg_M q$ for all $i$, then the error term is of size $O_{M}(q^{-1/2})$ and it tends to zero as $q\to \infty$. 

Let $U$ be a set of complexity $\leq M$. 
The goal of this section is to show that if $\#U\not\gg_M q$, then $\#U\ll_M 1$, and to give a sufficient condition for $\#U\gg_M q$. 

Since $U$ is a union of sets $S$ of $\pi$-complexity $\leq M$, we restrict the discussion to the latter sets.
\begin{proposition}\label{thm:dicotomy}
    Let $S\subseteq \FF_q$ be a set of $\pi$-complexity $\leq M$. Then either $\#S\ll_M1$ or $\#S \gg_M q$. 
\end{proposition}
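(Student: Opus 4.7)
The plan is to realize $S$ as the image under a projection of the $\FF_q$-points of an explicit affine curve, and then to perform a case analysis driven by Lang--Weil together with a simple Galois-descent observation.

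Write $S = \bigcap_{j=1}^{k} f_j(\FF_q)$ with $k \leq M$ and $1 \leq \deg f_j \leq M$, and consider the affine variety
$$
W = \set*{(x_1, \dots, x_k, y) \in \AA^{k+1} \cond f_j(x_j) = y, \; 1 \leq j \leq k}
$$
together with the projection $\pi \colon W \to \AA^1$, $(x_1,\dots,x_k, y) \mapsto y$. Since each $f_j \colon \AA^1 \to \AA^1$ is finite and surjective, $\pi$ is finite and surjective of degree $\prod_j \deg f_j \leq M^M$; hence $W$ is non-empty of pure dimension $1$ and of total degree $O_M(1)$. Unwinding the definitions gives $S = \pi\pa*{W(\FF_q)}$, and the number of geometrically irreducible components of $W_{\bar\FF_q}$ is $O_M(1)$.

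I would then split according to whether $W_{\bar\FF_q}$ has a geometrically irreducible component that is defined over $\FF_q$. If such a component $W'$ exists, it is a geometrically irreducible affine curve of degree $O_M(1)$; the Lang--Weil estimate yields $\#W'(\FF_q) = q + O_M(q^{1/2})$, and since each fiber of $\pi|_{W'}$ has at most $\deg \pi|_{W'} \leq M^M$ points,
$$
\#S \;\geq\; \#\pi\pa*{W'(\FF_q)} \;\geq\; \frac{\#W'(\FF_q)}{\deg \pi|_{W'}} \;\gg_M\; q
$$
for $q$ sufficiently large in terms of $M$ (small $q$ fall trivially under the other alternative $\#S\leq q \ll_M 1$). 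If, on the other hand, no geometrically irreducible component of $W_{\bar\FF_q}$ is Frobenius-invariant, then any $P \in W(\FF_q)$ lying on a geometric component $W_i$ also lies on the Frobenius translate of $W_i$, which by assumption is a different component of $W_{\bar\FF_q}$. Therefore $W(\FF_q)$ is contained in the pairwise intersection locus of the geometric components of $W_{\bar\FF_q}$, which is $0$-dimensional of total degree $O_M(1)$, and so $\#S \leq \#W(\FF_q) = O_M(1)$.

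The main obstacle is the second case: one must prevent a hypothetical contribution of $\approx q^{1/2}$ points from each ``small'' geometrically irreducible component not defined over $\FF_q$, which would be incompatible with the target bound $\#S \ll_M 1$. The decisive input is the Galois-descent observation that an $\FF_q$-point lying on a non-$\FF_q$-defined geometric component must simultaneously lie on a distinct Frobenius conjugate of that component; this confines all $\FF_q$-points of $W$ to the $0$-dimensional intersection locus and upgrades the naive Lang--Weil error bound $O_M(q^{1/2})$ to the sharp $O_M(1)$ bound needed for the dichotomy.
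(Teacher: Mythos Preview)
Your argument is correct and takes a genuinely different route from the paper's. The paper invokes the Chebotarev framework set up earlier (the $n=1$ case of \autoref{ex:imagesoffij}): for unramified $\phi\colon\FF_q[t]\to\FF_q$ one has $\phi(t)\in S$ iff $\frobC[\Ocl_N/\Ocl_F]{\phi}$ meets every $\pi_j^{-1}(H_j)$, and the dichotomy is then read off from whether the $G_{N,0}$-invariant set $\Omega\subseteq G_{N,1}$ is empty or not, via \autoref{thm:chebotarev-density-theorem}. Your proof bypasses this machinery entirely: you realize $S$ as $\pi(W(\FF_q))$ for the explicit fiber-product curve $W$, and the two cases are governed by whether $W_{\bar\FF_q}$ has a Frobenius-stable geometric component, using Lang--Weil directly in the first case and the observation that $\FF_q$-points on non-Frobenius-stable components are trapped in the $0$-dimensional intersection locus in the second. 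The two dichotomies are of course equivalent (a Frobenius-stable component of $W$ corresponds to a point of $\Omega\cap G_{N,1}$), but your approach is more elementary and self-contained, while the paper's approach plugs into the surrounding Chebotarev framework and immediately yields the Galois-theoretic criterion recorded in \autoref{rem:dicotomy}. Your first case also slightly strengthens the sufficient condition proved in the subsequent proposition of the paper, which assumes $W$ absolutely irreducible rather than merely possessing an $\FF_q$-rational geometric component.
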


\begin{proof}
    We use  \autoref{ex:imagesoffij} and its notation, taking $n=1$, $t=t_1=\vect$, $\pi_j=\pi_{ij}$. Moreover, we identify $\phi\colon \FF_q[t]\to \FF_q$ with $\phi(t)\in \FF_q$.  For an unramified $\phi\colon \FF_q[t]\to \FF_q$ we have that  $\phi(t)\in S$ if and only if $\frobC[\Ocl_N/\Ocl_F]{\phi} \cap \pi_j^{-1}(H_j)\neq \varnothing$.  

    Hence if $\Omega\cap G_{N,1}\neq \varnothing$, then the assertion follows from \eqref{imageofmaps}. Otherwise, $\phi(t)\not\in S$. In the latter case, only ramified $\phi$ may satisfy $\phi(t)\in S$, and hence $\#S=O_M(1)$. 
\end{proof}

\begin{remark}\label{rem:dicotomy}
    The proof of \autoref{thm:dicotomy} gives that for large $q$ we have 
    $\#S\gg_M q$ if and only if $\Omega\cap G_{N,1}\neq \varnothing$.
\end{remark}

    We give a geometric sufficient condition for $\#S$ to be large.

\begin{proposition}
    Let $M>0$, let $k\leq M$, let 
     $f_j\in \FF_q[x]$ be polynomials of positive degree $\leq M$, $j=1,\ldots, k$. Let $C_j = \{ f_j(x)-t\}$ and $\pi_j\colon C_j\to \AA^1$ the projection $\pi_j(x,t) = t$. Assume that the fiber product $D=C_1 \times_{\AA^1} \dots \times_{\AA^1} C_k$ is absolutely irreducible. Then,
     \[
        \#  \bigcap_{j=1}^k f_{j}(\FF_q) \gg_M q.
     \]
\end{proposition}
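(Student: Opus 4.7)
The plan is to view the variety $D$ as a curve whose $\FF_q$-points almost cover the intersection $\bigcap_j f_j(\FF_q)$, and then apply the Lang--Weil bound to get the needed count.

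First I would unpack the geometry of $D$. By construction, $D \subseteq \AA^{k+1}$ with coordinates $(x_1, \ldots, x_k, t)$ is cut out by the $k$ equations $f_j(x_j) = t$, $j=1,\ldots,k$. The natural projection $\pi \colon D \to \AA^1$, $(x_1,\ldots,x_k,t)\mapsto t$, is finite of degree $\prod_{j=1}^k \deg f_j$, so $\dim D = 1$. A point $t \in \FF_q$ lies in $\bigcap_{j=1}^k f_j(\FF_q)$ precisely when for each $j$ there exists $x_j \in \FF_q$ with $f_j(x_j) = t$, i.e.\ precisely when the fiber $\pi^{-1}(t)$ has an $\FF_q$-point. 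Consequently,
\begin{equation}
    \pi\pa*{D\pa*{\FF_q}} = \bigcap_{j=1}^k f_j\pa*{\FF_q}.
\end{equation}

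Next I would count $D(\FF_q)$. By assumption, $D$ is absolutely irreducible, and its degree and ambient dimension are bounded in terms of $M$ (since $k \le M$ and $\deg f_j \le M$). The Lang--Weil estimates \cite{lang1954number} for an absolutely irreducible variety of dimension $1$ therefore give
\begin{equation}
    \#D\pa*{\FF_q} = q + O_M\pa*{q^{1/2}}.
\end{equation}
On the other hand, each fiber of $\pi$ has at most $\prod_{j=1}^k \deg f_j \le M^M$ geometric points, so
\begin{equation}
    \#\bigcap_{j=1}^k f_j\pa*{\FF_q} = \#\pi\pa*{D\pa*{\FF_q}} \geq \frac{\#D\pa*{\FF_q}}{M^M} \geq \frac{q + O_M\pa*{q^{1/2}}}{M^M} \gg_M q,
\end{equation}
for $q$ sufficiently large (depending on $M$), which is exactly the desired bound.

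The only real subtlety is the invocation of Lang--Weil: we must verify that the error constant depends only on $M$. This is standard, since the implicit constant in Lang--Weil depends only on the degree and the number of defining equations, both of which are controlled by $M$ in our setting. The role of the absolute irreducibility hypothesis is precisely to guarantee that the main term of Lang--Weil is $q$ and not $0$; without it, $D$ could decompose into components defined over a proper extension of $\FF_q$ with no $\FF_q$-points at all.
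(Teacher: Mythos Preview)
Your proof is correct and follows essentially the same route as the paper: identify $\pi(D(\FF_q))$ with (a subset of) $\bigcap_j f_j(\FF_q)$, apply Lang--Weil to the absolutely irreducible curve $D$, and divide by the bounded fiber size of $\pi$. The only cosmetic difference is that you establish the equality $\pi(D(\FF_q)) = \bigcap_j f_j(\FF_q)$, whereas the paper records only the inclusion $\pi(D(\FF_q)) \subseteq \bigcap_j f_j(\FF_q)$, which already suffices for the lower bound.
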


\begin{proof}
    Let $\pi\colon D\to \AA^{1}$ be the natural projection map. Then, $\deg \pi = \prod \deg\pi_j = \prod \deg f_j\ll_M1$. Since $\pi$ factors through $\pi_j$, for each $j$, we have  $\pi(D(\FF_q)) \subseteq \bigcap_j \pi_j(C_j(\FF_q))=\bigcap_j f_{j}(\FF_q)$.
    It remains to prove that $\pi(\#D(\FF_q)) \gg_Mq$. And indeed, 
    by the Lang-Weil estimates, $\#D(\FF_q) = q+O_{M}(q^{-1/2})$. 
    Hence, $\# \pi(D(\FF_q)) \geq \frac{1}{\deg \pi} q+O_{M}(q^{-1/2})\gg_Mq$. 
\end{proof}


\section{Comparison with Entin's theorem}
\label{sec:comp}

For a set $\hat{S} \subseteq \FF_q^n$, the irregularity of $\hat{S}$ as defined in \cite[eq. (4)]{entin2021factorization} is:
\begin{equation}
  \label{eq:irregularity-of-set}
  \irreg\pa*{\hat{S}} = \frac{q^n}{\#\hat{S}} \sum_{\vecb \in \FF_q^n} \abs*{\widehat{\onebb_{\hat{S}} }\pa*{\vecb}}.
\end{equation}

Let $S = \{\alpha^2 | \alpha\in \FF_q\}$.
We compute  $\irreg\pa*{S^n}$ 
and show that
  \begin{equation}
    \label{thm:irregularity-of-squares}
    \irreg\pa*{S^n} \ge \pa*{\sqrt{q}-1}^n.
  \end{equation}
  This implies that the error term in Entin's theorem \eqref{eq:entin-factorization} is $\gg q^{(n-1)/2}$, hence it does not imply \autoref{thm:main-squares-coefficients}. 
\begin{proof}[Computation of \eqref{thm:irregularity-of-squares}]
  By \cite[Lemma 5.1]{entin2021factorization}, $\irreg\pa*{S^n}=\irreg\pa*{S}^n$. Hence, its suffice to show  that $\irreg\pa*{S} \ge \sqrt{q}-1$.
  
  Consider the quadratic multiplicative character $\chi_2\colon \FF_q \to \CC$ defined by
  \begin{equation}
    \chi_2\pa*{\alpha} = \begin{cases}
      1 & \text{if } \exists \beta \in \FF_q^\times \text{ such that } \beta^2 = \alpha, \\
      0 & \text{if } \alpha = 0, \\
      -1 & \text{otherwise}.
    \end{cases}
  \end{equation}
  So for $\alpha\neq 0$ we have that $\onebb_{S}\pa*{\alpha} = \pa*{1 + \chi_2\pa*{\alpha}}/2$.
  Thus, for $\beta \in \FF_q^\times$, 
  \begin{multline}
    \widehat{\onebb_{S}}\pa*{\beta}
    = \frac{1}{q} \sum_{\alpha \in \FF_q} \onebb_{S}\pa*{\alpha} e_q\pa*{\alpha\beta} 
    = \frac{1}{q} \sum_{\alpha \in \FF_q^\times} \frac{1 + \chi_2\pa*{\alpha}}{2} \cdot e_q\pa*{\alpha\beta} + \frac{1}{q}\\
    = \frac{1}{2q} \sum_{\alpha \in \FF_q^\times} e_q\pa*{\alpha\beta} + \frac{1}{2q} \sum_{\alpha \in \FF_q^\times} \chi_2\pa*{\alpha} e_q\pa*{\alpha\beta} + \frac{1}{q}.
  \end{multline}
  By orthogonality of characters,  $\sum_{\alpha \in \FF_q^\times} e_q\pa*{\alpha\beta}=-1$.
  By the classical theory of Gauss sums (see e.g.\ \cite[Propostion 8.2.2]{ireland1990classical}), 
  \begin{equation}
    \abs*{\sum_{\alpha \in \FF_q^\times} \chi_2\pa*{\alpha} e_q\pa*{\alpha\beta}}  =\sqrt{q}.
  \end{equation}
    Thus by the reverse triangle inequality, we have that
  \begin{equation}\label{eq:bdd}
    \abs*{\widehat{\onebb_{S}}\pa*{\beta}} \ge
    \frac{1}{2\sqrt{q}} - \frac{1}{q}=\frac{\sqrt{q}-1}{2q}.
  \end{equation}

  Finally,  since $\#S =\pa*{q+1}/2$, we substitute \eqref{eq:bdd} in \eqref{eq:irregularity-of-set} to get that
  \begin{multline}
      \irreg\pa*{S} 
      = \frac{2q}{q+1} \sum_{\beta \in \FF_q} \abs*{\widehat{\onebb_{S}}\pa*{\beta}} \\
      = \frac{2q}{q+1} \sum_{\beta \in \FF_q^{\times}} \abs*{\widehat{\onebb_{S}}\pa*{\beta}} + \frac{2q}{q+1}
       \geq \frac{2q}{q+1} \frac{(q-1)(\sqrt{q}-1)}{2q} + \frac{2q}{q+1} 
       \\=\sqrt{q}-1+\frac{2q-2\sqrt{q}+2}{q+1}
      \geq \sqrt{q}-1. \qedhere
  \end{multline}
\end{proof}

\bibliographystyle{alpha}
\bibliography{main}

\end{document}